\def\input@path{{"/Users/russw/Documents/Research/mypapers/Shellings from relative shellings/"}}
\numberwithin{equation}{section}
\numberwithin{figure}{section}
\theoremstyle{plain}
\newtheorem{thm}{\protect\theoremname}[section]
\theoremstyle{plain}
\newtheorem{lem}[thm]{\protect\lemmaname}
\theoremstyle{remark}
\newtheorem{rem}[thm]{\protect\remarkname}
\theoremstyle{definition}
\newtheorem{example}[thm]{\protect\examplename}
\theoremstyle{plain}
\newtheorem{prop}[thm]{\protect\propositionname}
\theoremstyle{plain}
\newtheorem{cor}[thm]{\protect\corollaryname}
\providecommand{\corollaryname}{Corollary}
\providecommand{\examplename}{Example}
\providecommand{\lemmaname}{Lemma}
\providecommand{\propositionname}{Proposition}
\providecommand{\remarkname}{Remark}
\providecommand{\theoremname}{Theorem}
\begin{document}
\global\long\def\cc{\mathbb{C}}%

\global\long\def\link{\operatorname{link}}%

\global\long\def\Tau{\mathrm{T}}%

\global\long\def\Beta{\mathrm{B}}%

\global\long\def\TSAT{\mathsf{3SAT}}%

\global\long\def\NPtime{\mathsf{NP}}%

\global\long\def\coNPtime{\mathsf{coNP}}%

\global\long\def\Ptime{\mathsf{P}}%

\global\long\def\SHELL{\mathsf{SHELLABILITY}}%

\title[Shellings from relative shellings]{Shellings from relative shellings,\\
with an application to NP-completeness}
\author{Andrés Santamaría-Galvis and Russ Woodroofe}
\thanks{This work is supported in part by the Slovenian Research Agency (research
program P1-0285 and research projects J1-9108, N1-0160, and J1-2451). }
\address{Univerza na Primorskem, Glagoljaška 8, 6000 Koper, Slovenia}
\email{andres.santamaria@famnit.upr.si}
\urladdr{\url{https://sites.google.com/view/adsantamaria/}}
\address{Univerza na Primorskem, Glagoljaška 8, 6000 Koper, Slovenia}
\email{russ.woodroofe@famnit.upr.si}
\urladdr{\url{https://osebje.famnit.upr.si/~russ.woodroofe/}}
\begin{abstract}
Shellings of simplicial complexes have long been a useful tool in
topological and algebraic combinatorics. Shellings of a complex expose
a large amount of information in a helpful way, but are not easy to
construct, often requiring deep information about the structure of
the complex. It is natural to ask whether shellings may be efficiently
found computationally. In a recent paper, Goaoc, Paták, Patáková,
Tancer and Wagner gave a negative answer to this question (assuming
$\Ptime\neq\NPtime$), showing that the problem of deciding whether
a simplicial complex is shellable is $\NPtime$-complete.

In this paper, we give simplified constructions of various gadgets
used in the $\NPtime$-completeness proof of these authors. Using
these gadgets combined with relative shellability and other ideas,
we also exhibit a simpler proof of the $\NPtime$-completeness of
the shellability decision problem. Our method systematically uses
relative shellings to build up large shellable complexes with desired
properties.
\end{abstract}

\maketitle

\section{\label{sec:Introduction}Introduction}

A \emph{shelling }is a certain way of building up (or equivalently,
tearing down) a simplicial complex, facet by facet. A precise definition
may be found in Section~\ref{sec:Background}. Shellings have found
considerable application by combinatorialists and combinatorial algebraists.
In the 1970s, work of Hochster, Reisner, and Stanley showed how to
use shellings \cite{Hochster:1972,Reisner:1976,Stanley:1975a} to
prove that certain rings are Cohen-Macaulay; this is still a useful
tool \cite{Herzog/Hibi:2011,Miller/Sturmfels:2005,Villarreal:2015}.
The existence of a shelling makes computing homotopy type easy, and
the topology (up to homeomorphism) tractable in many cases \cite{Bjorner:1995,Chari:1997,Wachs:2007}.
In certain cases, the existence of a shelling is equivalent to the
existence of other interesting structure: for example, the order complex
of a finite group is shellable if and only if the group in question
is solvable \cite{Shareshian:2001}.

In a pair of papers \cite{Danaraj/Klee:1978b,Danaraj/Klee:1978a}
from the 1970s, Danaraj and Klee consider the decision problem $\SHELL$,
that is, the problem of determining whether a simplicial complex is
shellable. They showed that $\SHELL$ is in $\Ptime$ when restricted
to $2$-dimensional pseudomanifolds, and suggested that the general
problem might be $\NPtime$-complete. See also \cite{Kaibel/Pfetsch:2003}.
This problem sat open for 40 years, until Goaoc, Paták, Patáková,
Tancer and Wagner, in a significant recent advance \cite{Goaoc/Patak/Patakova/Tancer/Wagner:2018,Goaoc/Patak/Patakova/Tancer/Wagner:2019},
verified the problem to be $\NPtime$-complete:
\begin{thm}[{Goaoc, Paták, Patáková, Tancer, and Wagner \cite[Theorem 1]{Goaoc/Patak/Patakova/Tancer/Wagner:2019}}]
$\qquad$\linebreak{}
\label{thm:ShellNPcomplete}$\SHELL$ is $\NPtime$-complete, even
when restricted to $2$-dimensional simplicial complexes.
\end{thm}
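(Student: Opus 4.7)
The plan is to prove Theorem~\ref{thm:ShellNPcomplete} in two parts: membership in $\NPtime$, and $\NPtime$-hardness via reduction from $\TSAT$.

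Membership is the easy direction. The witness is simply a proposed shelling order $F_1, F_2, \ldots, F_k$ of the facets. Verification amounts to checking, for each $i$, that $F_i \cap (F_1 \cup \cdots \cup F_{i-1})$ is a pure $(\dim F_i - 1)$-dimensional subcomplex of $\partial F_i$. Both the certificate size and the verification time are clearly polynomial in the size of the input complex.

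For hardness, I will give a polynomial-time reduction sending each $\TSAT$ instance $\varphi$, with variables $x_1,\ldots,x_n$ and clauses $C_1,\ldots,C_m$, to a 2-dimensional simplicial complex $K_\varphi$ that is shellable exactly when $\varphi$ is satisfiable. The complex is assembled from three types of gadgets: a \emph{variable gadget} for each $x_i$ admitting essentially two inequivalent relative shellings, corresponding to the two truth values of $x_i$; a \emph{clause gadget} for each $C_j$ that is relatively shellable only when at least one of its literals is true; and \emph{connector gadgets} which carry the truth value chosen in each variable gadget to the clause gadgets in which that variable appears. The central tool is relative shellability: each gadget admits prescribed relative shellings with respect to its interface, and these are required to compose into a global shelling of $K_\varphi$ precisely when the local choices in the variable gadgets assemble into a satisfying assignment.

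The main obstacle is the design of the gadgets subject to three competing demands: (i) each must be 2-dimensional; (ii) the available relative shellings of variable and clause gadgets must interact rigidly enough that any global shelling forces a consistent truth assignment; and (iii) no \emph{unintended} shellings may exist which bypass the logical structure. The paper of Goaoc et al.\ already supplies working gadgets, so the real content is to reorganize the argument around relative shellability in a way that makes the gluing step essentially automatic, while simultaneously simplifying the individual gadgets. The easy direction --- satisfying assignment yields shelling --- reduces to composing the relative shellings of the individual gadgets in the order dictated by the assignment. The harder direction requires showing that any shelling of $K_\varphi$, when restricted to each gadget, must look like one of the canonical relative shellings, so that a globally consistent satisfying assignment can be read off from it.
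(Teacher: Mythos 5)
Your outline captures the broad shape of the argument (membership in $\NPtime$ plus a gadget-based reduction from $\TSAT$, using relative shellability to compose local shellings into a global one), and the membership half is handled correctly. But the hardness half is a plan rather than a proof, and the one concrete design decision you commit to goes the wrong way. You write that the reduction needs ``connector gadgets which carry the truth value chosen in each variable gadget to the clause gadgets,'' but one of the main points of the paper is that no such gadget is needed. The paper's choice gadget is a $\Tau^{(1)}$ glued by its free edge onto a sphere built from two literal hemispheres $D$ and $\neg D$; the constraint gadget for a clause with $k$ literals is the $k$-turbine $\Tau^{(k)}$ ($k\in\{2,3\}$), and its gluing tree $\Upsilon\cup\langle\mathcal F\rangle$ is glued \emph{directly} into the hemispheres of the relevant choice gadgets, one branch per literal, with the branch's terminal free edge landing in the interior of the positive or negative hemisphere according to the sign of the literal. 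This direct gluing replaces the consistency/conjunction gadget of Goaoc et al., which the paper explicitly eliminates (Remark~\ref{rem:ImprovementsOverPrior}). You also do not mention that the paper reduces from the variant of $\TSAT$ in which each literal occurs at most twice, which is what bounds the fan-out at each hemisphere and lets the gadgets be small and of fixed shape.

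The description of the hard direction also does not match what is actually required. You propose to show that ``any shelling of $K_\varphi$, when restricted to each gadget, must look like one of the canonical relative shellings.'' That local-rigidity statement would be hard to establish and is not what the paper proves. Instead the paper argues globally: Lemma~\ref{lem:HomTypeRedComplex} shows $\Delta$ is a wedge of $n$ spheres, so a shelling has exactly $n$ homology facets, and a counting argument places one per choice gadget, in one hemisphere per variable (declared \emph{true}). By Bj\"orner--Wachs the homology facets may be moved to the end of the shelling, and one then compares when each piece ``finishes shelling'' using only free-face counts: a $\Tau^{(1)}$ has one free face, so finishes before its false hemisphere; a false hemisphere has few free faces, so finishes before any constraint gadget touching it; and a constraint gadget, having two or three free faces all glued into hemispheres, must finish before at least one of its literals, which therefore cannot be false. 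That precedence chain, not a classification of induced shellings on gadgets, is what extracts a satisfying assignment. Finally, the easy direction is not automatic from Lemma~\ref{lem:GluingRelShellings} alone: you need Theorem~\ref{thm:TurbinesExist}~(\ref{enu:TurbinesRelShell}), that a turbine is shellable relative to $\Upsilon\cup\langle\mathcal E\rangle$ for any \emph{proper} subset $\mathcal E\subsetneq\mathcal F$, and the hypothesis that every clause has a true literal is precisely what guarantees the glued-along subcomplex is proper.
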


The proof is by polynomial reduction from $\TSAT$. As is typical
in such a reduction, the construction in \cite{Goaoc/Patak/Patakova/Tancer/Wagner:2019}
proceeds by building ``choice gadgets'' (corresponding to variables
in a $\TSAT$ instance), ``constraint gadgets'' (corresponding to
clauses), and some other needed gadgets for consistency. As an essential
building block, these authors require simplicial complexes that are
shellable, but where the shelling order is `rigid' in a certain precise
sense.

One such building block is a shellable complex with a single free
face $\tau$; in such a complex, every shelling order must end with
the facet containing $\tau$. The authors of \cite{Goaoc/Patak/Patakova/Tancer/Wagner:2019}
use a construction based on Bing's house with 2 rooms. This construction
is originally due to Malgouyres and Francés in \cite{Malgouyres/Frances:2008}.
As observed in \cite[Remark 9]{Goaoc/Patak/Patakova/Tancer/Wagner:2019},
there is also a somewhat more concrete construction due to Hachimori
\cite{Hachimori:2000}. We mention that a construction with similar
properties was earlier considered by Simon \cite[Appendix F]{Simon:1994},
and that Hachimori's construction was extended to arbitrary dimensions
by Adiprasito, Benedetti and Lutz \cite[Theorem 2.3]{Adiprasito/Benedetti/Lutz:2017}.

Another building block in \cite{Goaoc/Patak/Patakova/Tancer/Wagner:2019}
is a shellable complex with 3 nonadjacent free faces, and satisfying
some other conditions needed for gluing it to a larger construction.
The authors in \cite{Goaoc/Patak/Patakova/Tancer/Wagner:2019} roughly
sketch a construction based on Bing's house for the latter building
block, but do not provide all details. They instead give a reference
to another paper of Tancer \cite[Section 4]{Tancer:2016}, which provides
more details of the construction. The resulting simplicial complex
is fairly large and complicated.

The first result on this paper will be to give a simple and explicit
construction of shellable complexes with an arbitrary number of nonadjacent
free faces.
\begin{thm}
\label{thm:TurbinesExist}For any positive integer $n$, there is
a simplicial complex $\Tau^{(n)}$ and subcomplex $\Upsilon$ such
that:
\begin{enumerate}
\item $\Tau^{(n)}$ is a shellable and contractible $2$-dimensional complex,
having exactly $n$ free edges and no other free faces. Denote by
$\mathcal{F}$ the set of free edges.
\item \label{enu:TurbinesTree}$\Upsilon\cup\left\langle \mathcal{F}\right\rangle $
is a shellable and contractible $1$-dimensional complex (that is,
a tree), having exactly $n$ leaves; and so that for each leaf vertex,
the unique edge containing it is in $\mathcal{F}$.
\item \label{enu:TurbinesRelShell}For any proper subset $\mathcal{E}\subsetneq\mathcal{F}$
of free edges, the relative complex $(\Tau^{(n)},\Upsilon\cup\left\langle \mathcal{E}\right\rangle )$
is shellable.
\end{enumerate}
\end{thm}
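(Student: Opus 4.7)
The plan is to construct $\Tau^{(n)}$ as an $n$-bladed ``turbine'': take $n$ identical copies of a single shellable blade gadget $B$ and identify them along a common base to form a central hub. The free edges of $\Tau^{(n)}$ will be the designated free edges of the individual blades, and $\Upsilon$ will consist of the hub together with certain interior edges threading from the hub toward each $\tau_i$.

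First, I would fix a blade $B$ satisfying the following: (i) $B$ is a contractible, shellable $2$-complex with exactly one free edge $\tau$; (ii) $B$ contains a designated path $\pi$ in its $1$-skeleton from a base vertex $v$ to one endpoint of $\tau$; (iii) $B$ admits a shelling whose final facet contains $\tau$; and (iv) $B$ admits a reverse-type shelling that starts at the $\tau$-facet and shells outward, suitable for relative use once $\tau$ lies in the starting subcomplex. A small variant of Hachimori's collapsible-from-one-end construction, or a direct ad hoc complex, should serve as $B$.

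Second, I would form $\Tau^{(n)}$ by identifying the base vertices (or a base edge) of $n$ disjoint copies $B_1, \dots, B_n$ of $B$. Write $\tau_i$ and $\pi_i$ for the free edge and designated path of $B_i$, and set $\Upsilon = \bigcup_i \pi_i \setminus \{\tau_i\}$. Then $\Upsilon \cup \langle \mathcal{F}\rangle = \bigcup_i \pi_i$ is a tree with exactly $n$ leaves, each a vertex of some $\tau_i$, giving property~(2). Contractibility of $\Tau^{(n)}$ is immediate from contractibility of the blades and the fact that the gluing happens along a contractible (indeed, single-vertex or tree-like) subcomplex. No new free edges are created by the gluing, since identification occurs along interior edges; thus $\mathcal{F} = \{\tau_1, \dots, \tau_n\}$, giving~(1) once shellability is checked. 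A shelling of $\Tau^{(n)}$ is produced by shelling the blades one at a time in any order, each using its standard shelling ending at $\tau_i$; the intersection of a new blade with the partial complex is just the hub, so the shelling condition is easy to verify at the transition facets.

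The substantive part is property~(3). Fix $\mathcal{E} \subsetneq \mathcal{F}$ and pick some $\tau_k \in \mathcal{F} \setminus \mathcal{E}$. For each $i \neq k$, I would shell $B_i$ using its reverse-type shelling: because $\tau_i \in \mathcal{E}$ already lies in the starting subcomplex $\Upsilon \cup \langle \mathcal{E}\rangle$, the $\tau_i$-triangle has the required codimension-one boundary into the starting subcomplex and may legitimately be placed first. After processing the blades in $\mathcal{E}$ in this way, I would shell $B_k$ last, using its standard shelling ending at the $\tau_k$-triangle. The main obstacle is to verify the relative shelling condition at every facet, especially at the transitional facets where a blade first meets $\Upsilon$ together with the already-shelled blades; this forces the blade $B$ to be designed so that its interface with $\pi$ (and with the hub) is sufficiently simple that each newly added triangle meets the growing subcomplex along a pure codimension-one piece of its boundary. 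Once the blade geometry is set up so that this interface is a single path or a controlled tree, the verification should reduce to a finite, local check that is essentially inherited from the shellability of $B$ itself.
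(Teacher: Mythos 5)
Your proposal diverges substantially from the paper's construction, and in the key place where it diverges---the blade gadget---it runs into an obstruction that cannot be patched without a structural redesign.

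The central problem is your blade $B$. You ask that $B$ be contractible with exactly one free edge $\tau$ (your property~(i)), and that $B$ admit a ``reverse-type shelling'' that begins at the $\tau$-facet and is usable relatively once $\tau$ lies in the starting subcomplex (your property~(iv)). These two requirements are incompatible. Consider the relative complex $(B,\Gamma)$ where $\Gamma$ is a tree containing $\tau$; this is precisely what the blade $B_i$ sees inside $(\Tau^{(n)}, \Upsilon\cup\left\langle\mathcal{E}\right\rangle)$ when $\tau_i\in\mathcal{E}$. Since $B$ and $\Gamma$ are both contractible, $\tilde{H}_2(B,\Gamma)=0$, so any shelling of $(B,\Gamma)$ has no $2$-dimensional homology facets. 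The minimal new face of the final facet of such a shelling would then have to be a face that is free in $B$ and not in $\Gamma$. But the only free face of $B$ is $\tau$, and $\tau\in\Gamma$. So $(B,\Gamma)$ is not relatively shellable, no matter what order you choose. Because your proposed shelling of $\Tau^{(n)}$ handles each blade's facets as a consecutive block, the restriction to those facets would have to be a valid relative shelling of $(B_i,\,\Gamma\cap B_i)$---which, by the above, is impossible. This is exactly why the paper's blade $\Beta$ has three free edges rather than one.

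There is a second, independent problem in the gluing step. If the blades are identified along a single base vertex $v$, the result is not shellable even absolutely: the first facet of $B_2$ in the order would have to meet the already-shelled complex (contained in $B_1$) in a pure $1$-dimensional complex, yet the intersection is at most $\{v\}$. A single base edge is better but still gives a hub that is too small for property~(3). The paper explicitly warns against exactly this simplification: in the ``Additional remarks on turbines'' it notes that a more economical blade attached to the central part along a single facet would suffice for plain shellability, but ``Relative shellability, on the other hand, seems to require each blade to have two facets adjacent to the central $2n$-gon: an `in' face and an `out' face.'' Accordingly, the paper builds a genuine $2$-dimensional hub (a cone over a subdivided $n$-gon), glues each blade along \emph{two} adjacent edges of the hub, and in the relative shelling alternates between two hub triangles and a blade, so that the ``out'' edge of one blade feeds the ``in'' edge of the next via the hub. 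That interleaving is what makes property~(3) go through, and a sequential blade-by-blade scheme over a zero- or one-dimensional base does not replicate it.

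Finally, your proposal never exhibits a concrete blade satisfying (i)--(iv); even setting aside the impossibility above, the argument is incomplete as written.
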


An easy homology calculation gives (in the notation of the theorem)
that $(\Tau^{(n)},\Upsilon\cup\left\langle \mathcal{F}\right\rangle )$
is not shellable, so the word ``proper'' in Theorem~\ref{thm:TurbinesExist}~(\ref{enu:TurbinesRelShell})
cannot be removed. We remark also that Theorem~\ref{thm:TurbinesExist}
essentially restates in terms of relative shellability (and in a more
general context) the conditions required by \cite{Goaoc/Patak/Patakova/Tancer/Wagner:2019}.

Our second result will be an improved construction for and proof of
Theorem~\ref{thm:ShellNPcomplete}. Our construction improves on
that of \cite{Goaoc/Patak/Patakova/Tancer/Wagner:2019} in several
ways. We significantly simplify the needed choice gadgets, eliminating
a consistency gadget needed in the earlier paper, and markedly reducing
the size and complexity. See Remark~\ref{rem:ImprovementsOverPrior}.
Of course, we also use the improved building blocks of Theorem~\ref{thm:TurbinesExist}. 

Our proof of Theorem~\ref{thm:ShellNPcomplete} is also somewhat
differently structured from that of \cite{Goaoc/Patak/Patakova/Tancer/Wagner:2019}.
The authors of this paper give all proofs in terms of collapsibility,
rather than shellability (using a result of Hachimori \cite[Theorem 8]{Hachimori:2008}).
We phrase our proof in the language of shellability and relative shellability,
which we believe some readers may prefer. 

The main innovation that we introduce is the systematic use of relative
shellings to build up large shellable complexes with desired properties.
We use this approach in both the proof of Theorem~\ref{thm:TurbinesExist}
as well as in our new proof of Theorem~\ref{thm:ShellNPcomplete}.
We believe that our techniques may find application to other problems.

We mention that Danaraj and Klee asked a more specific question than
that answered by \cite{Goaoc/Patak/Patakova/Tancer/Wagner:2019}:
is $\SHELL$ $\NPtime$-complete when restricted to $d$-pseudomanifolds
(for some $d>2$)? We don't know the answer to this, but believe that
relative shellability ideas similar to those we use here may be helpful
in addressing the problem. A similar question that might be interesting
to consider: does $\SHELL$ remain $\NPtime$-complete when restricted
to complexes with an embedding in $\mathbb{R}^{3}$? $\mathbb{R}^{4}$?
Other spaces? 

Although $\SHELL$ of a $2$-dimensional complex is $\NPtime$-complete,
the problem restricted to a $2$-dimensional ball or sphere is trivial,
and in this situation a shelling can be computed in linear time \cite{Danaraj/Klee:1978b}.
Is $\SHELL$ for a $3$-dimensional ball or sphere in $\Ptime$? We
remark that one quite general construction of nonshellable $3$-balls
uses nontrivial knots as an essential ingredient (see e.g. \cite[Section XIV.6]{Bing:1983}).
As the $\mathsf{KNOTTEDNESS}$ problem has been shown to be in $\coNPtime$
\cite{Kuperberg:2014,Lackenby:2016UNP}, it seems plausible that the
restriction of $\SHELL$ to $3$-balls is also is in $\coNPtime$.

The paper is organized as follows. In Section~\ref{sec:Background}
we give general background on shellability and relative shellability.
We believe that some of the lemmas on relative shellability in this
section may be of broader use. In Section~\ref{sec:Turbines} we
construct the complexes as in Theorem~\ref{thm:TurbinesExist}. In
Section~\ref{sec:NPcomplete} we use these complexes and other ideas
to give our new proof of Theorem~\ref{thm:ShellNPcomplete}.

\section*{Acknowledgements}

We thank an anonymous referee for careful and thoughtful comments,
which helped us improve readability. We also thank Abhishek Rathod
for his helpful feedback on a draft of the paper. 

\section{\label{sec:Background}Background}

As usual, a \emph{simplicial complex} $\Delta$ is a family of sets
(called \emph{faces}) that is closed under inclusion. The simplicial
complex \emph{generated} by a family of sets $\mathcal{E}$ consists
of all subsets of sets in $\mathcal{E}$, and is denoted $\left\langle \mathcal{E}\right\rangle $.
The \emph{$f$-vector} of a simplicial complex $\Delta$ is $\left(f_{0},f_{1},\dots,f_{c}\right)$
where $f_{i}$ is the number of faces of $\Delta$ with $i$ vertices,
and the \emph{$h$-vector} of $\Delta$ is determined by $\sum h_{i}x^{c-i}=\sum f_{i}(x-1)^{c-i}$.
We denote as $f(\Delta)$ and $h(\Delta)$, respectively. Note that
some authors index the $f$-vector by dimension, rather than cardinality.

A \emph{free face} in $\Delta$ is a face $\tau\neq\emptyset$ which
is properly contained in exactly one facet (maximal face). It is easy
to show that if $\tau$ is a free face, then $\Delta$ deformation
retracts to the complex $\Delta\setminus\tau$ obtained by removing
from $\Delta$ all faces containing $\tau$.

A simplicial complex $\Delta$ is \emph{shellable} if there is an
ordering $\sigma_{1},\sigma_{2},\dots,\sigma_{m}$ (a \emph{shelling})\emph{
}of the facets such that each $\sigma_{j}$ with $j>1$ intersects
with the complex $\Delta_{j-1}$ generated by $\sigma_{1},\dots,\sigma_{j-1}$
in a pure $\left(\dim\sigma_{j}\right)-1$ complex. A useful equivalent
condition is as follows: 
\begin{equation}
\forall j\,\forall i<j\,\exists k<j\,\text{ such that }\sigma_{i}\cap\sigma_{j}\subseteq\sigma_{k}\cap\sigma_{j}=\sigma_{j}\setminus\left\{ x\right\} \text{, some }x\in\sigma_{j}.\label{eq:ShellCond}
\end{equation}
 Each facet $\sigma_{j}$ in a shelling contains a minimal ``new''
face, given by \linebreak{}
$\left\{ x\in\sigma_{j}\,:\,\sigma_{j}\setminus\left\{ x\right\} \subseteq\sigma_{k},\text{ some }k<j\right\} $.
If the minimal new face is $\sigma_{j}$ itself, then we say $\sigma_{j}$
is a \emph{homology facet}, otherwise, $\sigma_{j}$ contains a face
that is free in some earlier $\Delta_{k}$. 

A \emph{relative simplicial complex} is a pair $(\Delta,\Gamma)$
of simplicial complexes, where $\Gamma$ is a subcomplex of $\Delta$.
The\emph{ faces} of $\left(\Delta,\Gamma\right)$ are the faces of
$\Delta$ that are not faces of $\Gamma$. It may be helpful to recall
from algebraic topology that there is a homology theory for relative
complexes, and that $\tilde{H}_{i}(\Delta,\Gamma)$ is isomorphic
to the homology $\tilde{H}_{i}(\Delta/\Gamma)$ of the quotient of
$\Delta$ by $\Gamma$.

A relative simplicial complex $\Psi=\left(\Delta,\Gamma\right)$ is
\emph{(relatively) shellable} if there is an ordering $\sigma_{1},\sigma_{2},\dots,\sigma_{m}$
(a \emph{shelling}) of the facets of $\Psi$ such that each $\sigma_{j}$
contains a unique minimal ``new'' face \cite[Chapter III.7]{Stanley:1996}.
That is, the set of subsets of each $\sigma_{j}$ that are not in
a preceding $\sigma_{i}$ or in $\Gamma$ has a unique minimal element
under inclusion. Previous work on relative shellability includes \cite{Adiprasito/Benedetti:2017,Adiprasito/Sanyal:2016,Codenotti/Katthan/Sanyal:2019,Stanley:1996b,White:2018unp}. 

We will sometimes refer to shellability of a simplicial complex as
\emph{absolute shellability}, in order to contrast with shellability
of a relative complex. We see that absolute shellability is the special
case of relative shellability where $\Gamma=\emptyset$. 

We find it more convenient to work with a formulation of relative
shellability that is closer to the standard definition for the absolute
case. The proof is essentially the same as in the absolute case \cite[Section 2]{Bjorner/Wachs:1996}.
\begin{lem}
\label{lem:RelShellRight}Let $\Gamma\neq\emptyset$. An ordering
$\sigma_{1},\dots,\sigma_{m}$ of the facets of a relative simplicial
complex $\Psi=\left(\Delta,\Gamma\right)$ is a shelling if and only
if each $\sigma_{j}$ intersects in a pure $\left(\dim\sigma_{j}\right)-1$
complex with the complex $\Delta_{j-1}$ generated by $\sigma_{1},\dots,\sigma_{j-1}$
together with $\Gamma$.
\end{lem}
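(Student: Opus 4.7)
The plan is to mimic the standard proof of the analogous absolute statement \cite{Bjorner/Wachs:1996}, using $\Omega_{j-1} := \Delta_{j-1} \cup \Gamma$ in place of $\Delta_{j-1}$ throughout. The first observation is that a subface $F \subseteq \sigma_j$ is ``new'' in the relative sense (not contained in any earlier $\sigma_i$ and not a face of $\Gamma$) exactly when $F \notin \Omega_{j-1}$. The old subfaces of $\sigma_j$ therefore form the subcomplex $X_j := \left\langle \sigma_j \right\rangle \cap \Omega_{j-1}$ of the full simplex on $\sigma_j$, and the lemma reduces to a facet-by-facet equivalence between (i) $X_j$ is pure of dimension $\dim\sigma_j - 1$, and (ii) the collection of subsets of $\sigma_j$ not lying in $X_j$ has a unique minimum under inclusion.

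I would establish this equivalence as a purely combinatorial fact about a proper subcomplex $X \subsetneq \left\langle \sigma \right\rangle$. For (i)$\Rightarrow$(ii), set $R := \{x \in \sigma : \sigma \setminus \{x\} \in X\}$; purity identifies the facets of $X$ as the codimension-one sets $\sigma \setminus \{x\}$ for $x \in R$, so $F \in X$ iff $F$ lies in one of these facets iff $R \not\subseteq F$. Hence the complement of $X$ inside $\left\langle \sigma \right\rangle$ is precisely the principal filter above $R$, with unique minimum $R$. For the reverse direction, once a unique minimum $R$ of the complement is given, the condition $F \notin X \Leftrightarrow R \subseteq F$ forces every facet of $X$ to have the form $\sigma \setminus \{r\}$ for some $r \in R$, yielding purity in dimension $\dim\sigma - 1$.

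The only bookkeeping to worry about is the edge case $X_j = \{\emptyset\}$, which can arise only when $\sigma_j$ is a single vertex; there $X_j$ is pure of dimension $-1 = \dim\sigma_j - 1$, and $\sigma_j$ itself serves as the unique minimum new face, so both formulations remain consistent. The hypothesis $\Gamma \neq \emptyset$ is used precisely to guarantee that $\emptyset \in \Omega_{j-1}$ for every $j$, so that $X_j$ is always a genuine nonvoid subcomplex (and in particular $R$ above is nonempty in the generic case). Beyond these edge-case checks I anticipate no serious obstacle: the argument is a direct relativization of Bj\"{o}rner--Wachs, and the only real work is verifying that the ``new face'' language correctly absorbs the subcomplex $\Gamma$ via the substitution $\Delta_{j-1} \rightsquigarrow \Omega_{j-1}$.
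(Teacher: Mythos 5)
Your proposal is correct and takes essentially the same approach as the paper: both directions hinge on identifying the candidate minimal new face as $R=\gamma=\left\{ x\in\sigma_{j}\,:\,\sigma_{j}\setminus\left\{ x\right\} \in\Delta_{j-1}\right\}$ and then checking that a face of $\sigma_{j}$ is new exactly when it contains this set. Your additional bookkeeping around the degenerate case $X_{j}=\{\emptyset\}$ and the role of $\Gamma\neq\emptyset$ in guaranteeing $\emptyset\in\Delta_{j-1}$ is sound but does not change the substance of the argument.
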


\begin{proof}
If $\Psi$ satisfies the condition, then any face of $\sigma_{j}$
either contains $\gamma=\left\{ x\,:\,\sigma_{j}\setminus x\in\Delta_{j-1}\right\} $,
or else is a face of $\Delta_{j-1}$. Thus, $\gamma$ is the required
minimal new face. Conversely, if $\Psi$ is shellable with minimal
new face $\gamma$, then the intersection with $\Delta_{j-1}$ is
generated by the faces $\sigma_{j}\setminus v$ over $v\in\gamma$.
It follows that the intersection is pure of codimension one, as required.
\end{proof}
Now the immediate analogue of (\ref{eq:ShellCond}) is as follows.
Let $\Delta_{j-1}$ be as in the statement of Lemma~\ref{lem:RelShellRight}.
Then an ordering $\sigma_{1},\dots,\sigma_{m}$ is a relative shelling
if and only if the following holds:
\begin{equation}
\forall j\,\forall\tau\in\Delta_{j-1}\,\exists\tau_{*}\in\Delta_{j-1}\text{ such that }\tau\cap\sigma_{j}\subseteq\tau_{*}\cap\sigma_{j}=\sigma_{j}\setminus\left\{ x\right\} ,\text{ some }x\in\sigma_{j}.\label{eq:RelShellCond}
\end{equation}

We notice that we may restrict $\tau_{*}$ to be a facet of $\Delta_{j-1}$
in (\ref{eq:RelShellCond}) without loss of generality, indeed, $\tau_{*}$
may be selected to be either one of $\sigma_{1},\dots,\sigma_{j-1}$
or a facet of $\Gamma$. Since facets of $\Gamma$ are not necessarily
facets of $\Delta$, however, we cannot necessarily select $\tau_{*}$
to be a facet of $\Delta$.
\begin{rem}
The condition of (\ref{eq:RelShellCond}) and Lemma~\ref{lem:RelShellRight}
are exactly the same as those for extending a partial shelling to
a full shelling of a simplicial complex. Thus, we may think of relative
shellability as giving a setting where we may pretend we have already
shelled $\Gamma$ (whether that is possible or not), and must continue
the shelling order for the remaining facets (subject to the usual
facet-attachment conditions).
\end{rem}

Our main use of relative simplicial complexes and relative shellings
is as a tool to build (absolutely) shellable simplicial complexes.
The key observation for this endeavor is the following lemma.
\begin{lem}[Gluing Lemma]
 \label{lem:GluingRelShellings}Suppose that $\left(\Delta_{a},\Gamma_{a}\right)$
and $\left(\Delta_{b},(\Delta_{a}\cap\Delta_{b})\cup\Gamma_{b}\right)$
are shellable relative simplicial complexes, where $\Delta_{a}$ and
$\Delta_{b}$ are both contained in a common ambient supercomplex.
If $\Delta_{a}\cap\Gamma_{b}\subseteq\Gamma_{a}$ and $(\Delta_{a}\cap\Delta_{b})\cup\Gamma_{b}\neq\emptyset$,
then the relative complex $\Psi=\left(\Delta_{a}\cup\Delta_{b},\Gamma_{a}\cup\Gamma_{b}\right)$
is shellable.
\end{lem}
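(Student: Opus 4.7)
My plan is to concatenate shellings. Fix a shelling $\sigma_1, \ldots, \sigma_p$ of $(\Delta_a, \Gamma_a)$ and a shelling $\tau_1, \ldots, \tau_q$ of $(\Delta_b, (\Delta_a \cap \Delta_b) \cup \Gamma_b)$. I aim to show that the concatenation $\sigma_1, \ldots, \sigma_p, \tau_1, \ldots, \tau_q$ is a shelling of $\Psi = (\Delta_a \cup \Delta_b, \Gamma_a \cup \Gamma_b)$, verifying this facet-by-facet via Lemma~\ref{lem:RelShellRight} (or via (\ref{eq:ShellCond}) in the degenerate case when $\Gamma_a \cup \Gamma_b = \emptyset$).

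For each $\sigma_j$, the crucial observation is that $\sigma_j \subseteq \Delta_a$, so any subface of $\sigma_j$ lying in $\Gamma_b$ also lies in $\Delta_a \cap \Gamma_b$, which the hypothesis $\Delta_a \cap \Gamma_b \subseteq \Gamma_a$ forces into $\Gamma_a$. Consequently, the intersection of $\sigma_j$ with the complex generated by $\sigma_1, \ldots, \sigma_{j-1}$ together with $\Gamma_a \cup \Gamma_b$ agrees with the intersection using only $\Gamma_a$ on the right, and this is pure of codimension one by the relative shellability of $(\Delta_a, \Gamma_a)$. For each $\tau_j$, the facets $\sigma_1, \ldots, \sigma_p$ together with $\Gamma_a$ already generate all of $\Delta_a$ (every face of $\Delta_a$ is either in $\Gamma_a$ or is a subface of some $\sigma_i$), so the previous complex is $\Delta_a \cup \langle \tau_1, \ldots, \tau_{j-1}\rangle \cup \Gamma_b$. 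Since $\tau_j \subseteq \Delta_b$, intersecting with $\Delta_a$ collapses to intersecting with $\Delta_a \cap \Delta_b$, and the intersection condition reduces directly to that for $(\Delta_b, (\Delta_a \cap \Delta_b) \cup \Gamma_b)$.

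The main obstacle I anticipate is bookkeeping: checking that the concatenated list faithfully enumerates the facets of $\Psi$, with no repetition and no extraneous entries. A $\sigma_j$ is a facet of $\Delta_a$ not in $\Gamma_a$, and by $\Delta_a \cap \Gamma_b \subseteq \Gamma_a$ it also avoids $\Gamma_b$; each $\tau_j$ lies outside $\Delta_a$ by construction, so the two lists are disjoint. The non-emptiness hypothesis $(\Delta_a \cap \Delta_b) \cup \Gamma_b \neq \emptyset$ is precisely what allows Lemma~\ref{lem:RelShellRight} to be invoked for the second relative shelling, which in turn drives the rest of the argument.
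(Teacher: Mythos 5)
Your proof is correct and takes the same approach as the paper: concatenate a shelling of $(\Delta_a,\Gamma_a)$ with one of $(\Delta_b,(\Delta_a\cap\Delta_b)\cup\Gamma_b)$, using the hypothesis $\Delta_a\cap\Gamma_b\subseteq\Gamma_a$ to show the first block's attachment conditions are unchanged by the presence of $\Gamma_b$, and using $\tau_j\subseteq\Delta_b$ to reduce the second block's attachment conditions to those of $(\Delta_b,(\Delta_a\cap\Delta_b)\cup\Gamma_b)$. The paper's proof is terser (it packs the first reduction into the single identity $(\Gamma_a\cup\Gamma_b)\cap\Delta_a=\Gamma_a$ and leaves the second implicit in the citation of Lemma~\ref{lem:RelShellRight}), but the content is the same, and your bookkeeping paragraph verifying that the two facet lists are disjoint fills in a detail the paper leaves unstated.
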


\begin{proof}
We begin with the shelling order of $\left(\Delta_{a},\Gamma_{a}\right)$.
Since $\left(\Gamma_{a}\cup\Gamma_{b}\right)\cap\Delta_{a}=\Gamma_{a}$,
this is also a partial shelling order of $\Psi$. We follow with the
shelling order of $\left(\Delta_{b},\left(\Delta_{a}\cap\Delta_{b}\right)\cup\Gamma_{b}\right)$,
using Lemma~\ref{lem:RelShellRight}.
\end{proof}
\begin{example}
\label{exa:2triangleRelTree}Let $\Delta_{a}$ be the triangle with
vertices $1,2,3$, and $\Delta_{b}$ be the triangle with vertices
$2,3,4$. It is easy to see that $\Delta_{a}$ is shellable relative
to any tree having at least $1$ edge. Using Lemma~\ref{lem:GluingRelShellings},
we see that the two-triangle complex $\Delta_{a}\cup\Delta_{b}$ is
shellable relative to any tree $\Gamma$ having at least $1$ edge.
For without loss of generality, we may assume that $\Delta_{a}$ contains
at least $1$ edge of $\Gamma$, and that $\Delta_{b}$ has at most
$1$ edge of $\Gamma$ not in $\Delta_{a}$. Now $\Gamma_{b}=(\Delta_{a}\cap\Delta_{b})\cup(\Gamma\cap\Delta_{b})$
is a tree in $\Delta_{b}$. Taking $\Gamma_{a}=\Delta_{a}\cap\Gamma$,
we see that the conditions of the lemma are met.
\end{example}

It is worthwhile to observe explicitly that, since absolute shellings
form a special case of relative shellings, Lemma~\ref{lem:GluingRelShellings}
may be used to combine an absolute shelling and a relative shelling
into an absolute shelling.

In order to apply Lemma~\ref{lem:GluingRelShellings}, we will need
various relative shellings. It will often be more convenient for us
to find absolute shellings, and apply the following lemma.
\begin{lem}
\label{lem:RelShellFromShell}Let $\Delta$ be a pure $d$-dimensional
simplicial complex with shelling order $\sigma_{1},\dots,\sigma_{m}$.
Furthermore, let $\Gamma$ be a pure $(d-1)$-dimensional subcomplex
of $\Delta$ such that for any facet $\tau$ of $\Gamma$ and any
$\sigma_{j}$ in the shelling order, one of the following holds:
\begin{enumerate}
\item \label{enu:RSgrowface}$\tau\cap\sigma_{j}\subsetneq\tau'\cap\sigma_{j}$
for some facet $\tau'$ of $\Gamma$, or
\item \label{enuRSshell}$\tau\cap\sigma_{j}\subseteq\sigma_{i}\cap\sigma_{j}$
for some $i<j$, or
\item \label{enuRScontain}$\tau\subseteq\sigma_{j}$.
\end{enumerate}
Then $\sigma_{1},\dots,\sigma_{m}$ is a shelling of the relative
complex $\left(\Delta,\Gamma\right)$.
\end{lem}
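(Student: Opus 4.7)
The plan is to verify the criterion~(\ref{eq:RelShellCond}). Fix $j$ and a face $\tau\in\Delta_{j-1}$, where $\Delta_{j-1}$ is the complex generated by $\sigma_{1},\dots,\sigma_{j-1}$ together with $\Gamma$; I must produce $\tau_{*}\in\Delta_{j-1}$ with $\tau\cap\sigma_{j}\subseteq\tau_{*}\cap\sigma_{j}=\sigma_{j}\setminus\{x\}$ for some $x\in\sigma_{j}$. The natural case split is on whether $\tau$ already lies in some earlier facet $\sigma_{i}$ of $\Delta$.

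If $\tau\subseteq\sigma_{i}$ for some $i<j$, then, because $\sigma_{1},\dots,\sigma_{m}$ is an absolute shelling of $\Delta$, the criterion~(\ref{eq:ShellCond}) supplies $k<j$ with $\sigma_{i}\cap\sigma_{j}\subseteq\sigma_{k}\cap\sigma_{j}=\sigma_{j}\setminus\{x\}$. Setting $\tau_{*}=\sigma_{k}\in\Delta_{j-1}$ completes this case. Otherwise $\tau$ must belong to $\Gamma$, so I pick a facet $\tau_{0}$ of $\Gamma$ containing $\tau$ and apply the three alternatives of the hypothesis to the pair $\tau_{0},\sigma_{j}$.

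Alternative~(\ref{enuRScontain}) gives $\tau_{0}\subseteq\sigma_{j}$; since $\dim\tau_{0}=d-1=\dim\sigma_{j}-1$, this forces $\tau_{0}=\sigma_{j}\setminus\{x\}$, so $\tau_{*}:=\tau_{0}\in\Gamma\subseteq\Delta_{j-1}$ works. Alternative~(\ref{enuRSshell}) gives $\tau_{0}\cap\sigma_{j}\subseteq\sigma_{i}\cap\sigma_{j}$ for some $i<j$, whereupon (\ref{eq:ShellCond}) again yields an acceptable $\tau_{*}=\sigma_{k}$.

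The delicate case is alternative~(\ref{enu:RSgrowface}), which merely promises another $\Gamma$-facet $\tau_{1}$ with $\tau_{0}\cap\sigma_{j}\subsetneq\tau_{1}\cap\sigma_{j}$, and not yet a valid $\tau_{*}$. The plan is to iterate: feed $\tau_{1}$ back into the hypothesis to obtain $\tau_{2}$, and so on. Each step strictly enlarges the intersection with $\sigma_{j}$, so because $\sigma_{j}$ is finite the chain must terminate, and it can only do so at alternative~(\ref{enuRSshell}) or~(\ref{enuRScontain}), giving a usable $\tau_{*}$ by the previous paragraph. Since $\tau\cap\sigma_{j}\subseteq\tau_{0}\cap\sigma_{j}\subseteq\tau_{*}\cap\sigma_{j}$, the criterion is satisfied and the proof is finished.
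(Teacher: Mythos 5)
Your proof is correct, and it follows essentially the same route as the paper's: both reduce to the facet-attaching condition for relative shellings and split on whether the face in question comes from an earlier $\sigma_i$ or from $\Gamma$, invoking~(\ref{eq:ShellCond}) in the former case and the three alternatives in the latter. The one place you go further is alternative~(\ref{enu:RSgrowface}): the paper disposes of the $\Gamma$-facet case with the phrase ``the desired is immediate by (\ref{enu:RSgrowface}) or (\ref{enuRScontain})'', but alternative~(\ref{enu:RSgrowface}) by itself only hands you another $\Gamma$-facet with a strictly larger intersection, not a codimension-one face, so an iteration (or a ``take a maximal intersection'' reduction) is genuinely needed to close that case. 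You supply exactly that chain-and-terminate argument, so your writeup is, if anything, a bit more careful than the paper's on this point. No gap.
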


\begin{proof}
Let $\tau$ be a facet of the complex $\Delta_{j-1}$ generated by
$\sigma_{1},\dots,\sigma_{j-1}$ together with $\Gamma$. It suffices
to show that there is another facet $\tau'$ of $\Delta_{j-1}$ so
that $\tau'\cap\sigma_{j}$ is a $(d-1)$-face. If $\tau\cap\sigma_{j}\subseteq\sigma_{i}$
for $i<j$, then this follows from the definition of shelling. Otherwise,
$\tau$ is a facet of $\Gamma$ for which (\ref{enuRSshell}) does
not hold, and the desired is immediate by (\ref{enu:RSgrowface})
or (\ref{enuRScontain}).
\end{proof}
\begin{rem}
In simple language, the condition of Lemma~\ref{lem:RelShellFromShell}
requires that every maximal intersection of $\sigma_{j}$ with $\Gamma$
is either a facet of $\Gamma$ (so $(d-1)$-dimensional), or else
contained in some earlier facet in the shelling.
\end{rem}

\begin{example}
A \emph{shedding vertex} $v$ of $\Delta$ has the property that if
$v$ is in a face $\sigma$, then there is some other vertex $w$
so that $(\sigma\setminus v)\cup w$ is a face. It is well-known that
if $v$ is a shedding vertex such that $\Delta\setminus v$ and $\link_{\Delta}v$
are both shellable, then also $\Delta$ is shellable \cite{Bjorner/Wachs:1997,Wachs:1999b}.
This fact for pure complexes follows also from Lemma~\ref{lem:GluingRelShellings},
where we take $\Delta_{a}=\Delta\setminus v$, $\Delta_{b}=v*\link_{\Delta}v$,
and $\Gamma_{a}=\Gamma_{b}=\emptyset$. Now $\left(v*\link_{\Delta}v,\link_{\Delta}v\right)$
is shellable by Lemma~\ref{lem:RelShellFromShell} and the shelling
order on $v*\link_{\Delta}v$, where we take $\tau'=\sigma_{j}\setminus v$
whenever (\ref{enuRSshell}) fails. We recover that $\Delta$ is shellable
relative to $\Gamma_{a}\cup\Gamma_{b}=\emptyset$, i.e., $\Delta$
is (absolutely) shellable.

A similar argument applies to a shedding face $\gamma$, setting $\Delta_{a}=\Delta\setminus\gamma$
and $\Delta_{b}=\gamma*\link_{\Delta}\gamma$, so that $\Delta_{a}\cap\Delta_{b}=(\gamma*\link_{\Delta}\gamma)\setminus\gamma$.
As we will not use this result, we omit the details.
\end{example}

We use the following observation on gluing simplicial complexes freely
and without explicit reference, but state it here for clarity. The
proof is immediate from definitions.
\begin{lem}
\label{lem:GlueSimplicial}Let $\Delta_{a}$ and $\Delta_{b}$ be
simplicial complexes on disjoint vertex sets $V(\Delta_{a})$ and
$V(\Delta_{b})$. Let $\left\{ v_{1},\dots,v_{k}\right\} \subseteq V(\Delta_{a})$
and $\left\{ w_{1},\dots,w_{k}\right\} \subseteq V(\Delta_{b})$,
and let $\Gamma_{a}$ and $\Gamma_{b}$ respectively be the subcomplexes
induced by these vertex subsets. By identifying each $v_{i}$ with
$w_{i}$, we form a simplicial complex $\Sigma$, where topologically
$\Sigma$ is formed by gluing $\Delta_{a}$ and $\Delta_{b}$ along
$\Gamma_{a}\cap\Gamma_{b}$.
\end{lem}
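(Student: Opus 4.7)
The plan is essentially bookkeeping, because the lemma really asserts that a natural quotient construction is well-behaved. First I would form the vertex set of $\Sigma$ as the quotient of the disjoint union $V(\Delta_{a})\sqcup V(\Delta_{b})$ by the equivalence relation generated by $v_{i}\sim w_{i}$ for $i=1,\dots,k$, letting $q$ denote the quotient map. I would then define the faces of $\Sigma$ to be the images under $q$ of the faces of $\Delta_{a}$ together with those of $\Delta_{b}$.

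Next I would verify that $\Sigma$ is a simplicial complex, i.e.\ that the collection of faces is closed under taking subsets. Since the only identifications in $q$ are between vertices of $V(\Delta_{a})$ and vertices of $V(\Delta_{b})$, the restrictions $q|_{V(\Delta_{a})}$ and $q|_{V(\Delta_{b})}$ are both injective. Consequently every face of $\Sigma$ pulls back to an honest face of $\Delta_{a}$ or of $\Delta_{b}$, and any subset of $q(\sigma)$ is the image of a subset of $\sigma$, which in turn is a face of $\Delta_{a}$ (respectively $\Delta_{b}$) by closure of those complexes; hence it is a face of $\Sigma$.

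Finally I would check the topological assertion. Two faces $\sigma\in\Delta_{a}$ and $\tau\in\Delta_{b}$ satisfy $q(\sigma)=q(\tau)$ exactly when $\sigma\subseteq\{v_{1},\dots,v_{k}\}$, $\tau\subseteq\{w_{1},\dots,w_{k}\}$, and the two sets correspond under $v_{i}\leftrightarrow w_{i}$; viewed in $\Sigma$ these are precisely the faces belonging to both $q(\Gamma_{a})$ and $q(\Gamma_{b})$, i.e.\ the faces of $\Gamma_{a}\cap\Gamma_{b}$. Passing to geometric realizations, $|\Sigma|$ is therefore the pushout of the inclusions $|\Gamma_{a}\cap\Gamma_{b}|\hookrightarrow|\Delta_{a}|$ and $|\Gamma_{a}\cap\Gamma_{b}|\hookrightarrow|\Delta_{b}|$, which is exactly the gluing of $\Delta_{a}$ and $\Delta_{b}$ along $\Gamma_{a}\cap\Gamma_{b}$.

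There is no real obstacle in the proof; the whole content is in carefully unwinding the definition of the quotient so that the induced subcomplex structure on $\{v_{1},\dots,v_{k}\}=\{w_{1},\dots,w_{k}\}$ is traced back to $\Gamma_{a}$ and $\Gamma_{b}$ on each side.
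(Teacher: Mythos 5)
Your argument is correct and is essentially the same as the paper's, which simply states that the proof is immediate from the definitions; you have just written out the routine quotient-map bookkeeping that the paper leaves to the reader.
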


We refer to \cite{Bjorner:1995} for additional background and definitions
on simplicial combinatorics.

\section{\label{sec:Turbines}Turbines and blades}

In this section, we construct the complexes $\Tau^{(n)}$ of Theorem~\ref{thm:TurbinesExist}.
We call the complex $\Tau^{(n)}$ the \emph{$n$-turbine, }for reasons
that will be apparent from Figure~\ref{fig:nTurbine}.

\subsection{\label{subsec:1turbine}The $1$-turbine}

First, Hachimori's example (pictured in Figure~\ref{fig:Hachimori})
will be $\Tau^{(1)}$. Hachimori verified his example to be shellable
in his thesis \cite{Hachimori:2000}, and it is clear from inspection
that it has a single free face. It is immediate from Lemma~\ref{lem:RelShellFromShell}
that if $\Upsilon$ is generated by any edge of the initial facet
in a shelling, then $(\Tau^{(1)},\Upsilon)$ is relatively shellable.
It is straightforward to find a shelling that begins with a facet
intersecting with the free edge at a vertex; we have shown one such
in Figure~\ref{fig:Hachimori}, relative to the subcomplex $\Upsilon=\left\langle sw\right\rangle $.
This proves Theorem~\ref{thm:TurbinesExist} for the case $n=1$.
As previously mentioned, this was already substantively observed in
\cite{Goaoc/Patak/Patakova/Tancer/Wagner:2019}.
\begin{rem}
The underlying topological space of Hachimori's example is a main
ingredient of our constructions. Since Hachimori based his construction
on the dunce cap space, we propose the \emph{tricorne cap} as a name
for this space.
\end{rem}

\begin{figure}
\includegraphics[scale=0.7]{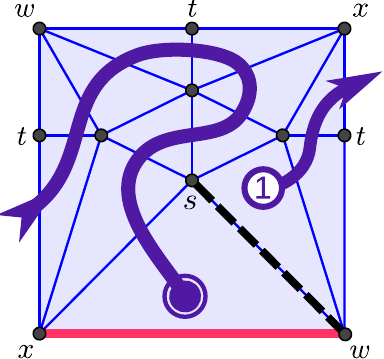}

\caption{\label{fig:Hachimori}The tricorne cap space, with a slight variation
of the triangulation of Hachimori. The arrows indicate a shelling
order that begins with the facet labeled $\textcircled{\scriptsize{1}}$
and ends with that labeled $\textcircled{\text{\ensuremath{\bullet}}}$.
The dashed edge is (a possible choice for) $\Upsilon$.}
\end{figure}

\subsection{\label{subsec:TurbineConstruction}Construction of $\protect\Tau^{(n)}$}

We will construct $\Tau^{(n)}$ for higher $n$ by gluing together
several copies of the tricorne cap space. We will need a triangulation
of $\Beta$ having $3$ adjacent free edges, shown in Figure~\ref{fig:BladeShellings}.
As our $n$-turbines will comprise $n$ copies of $\Beta$ glued around
a central ``shaft'', we call $\Beta$ the \emph{blade complex}.

\begin{figure}
\includegraphics[scale=0.7]{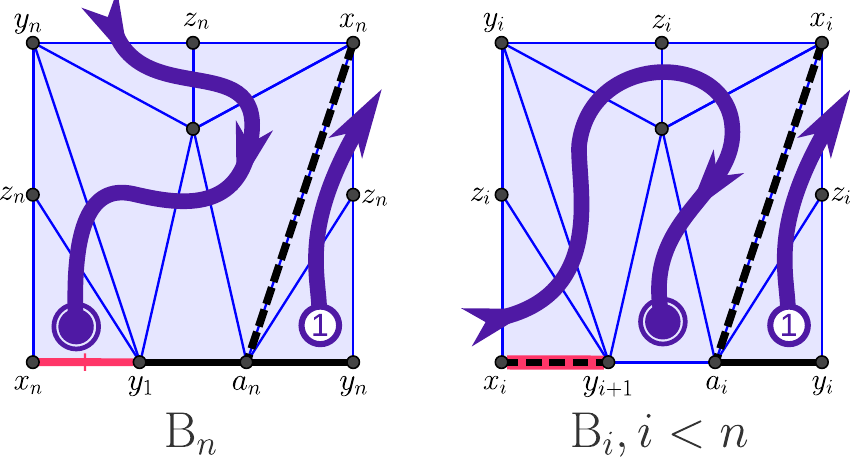}

\caption{\label{fig:BladeShellings}The blade complex $\protect\Beta$, pictured
with two shelling orders. Each shelling order begins with the facet
labeled $\textcircled{\scriptsize{1}}$ and ends with that labeled
$\textcircled{\text{\ensuremath{\bullet}}}$ in the respective diagram.
See also Corollary~\ref{cor:BladeRelShellings}.}
\end{figure}

Having constructed the blade, we now construct the $n$-turbine $\Tau^{(n)}$
for $n\geq3$. We begin with an $n$-cycle, having vertices $y_{1},y_{2},\dots,y_{n}$.
Subdivide each edge of the cycle by adding a vertex $a_{i}$ between
$y_{i}$ and $y_{i+1}$ (index considered mod $n$). Cone over the
subdivided $n$-gon to get a $2$-dimensional disc. Finally, for every
subdivided edge $y_{i},a_{i},y_{i+1}$, glue a copy $\Beta_{i}$ of
$\Beta$ along two adjacent free edges. A detailed schematic diagram
of the resulting construction can be found in Figure~\ref{fig:nTurbine}.

We also need to construct the tree subcomplex $\Upsilon$. Denote
by $w$ the apex vertex of the cone over the subdivided $n$-gon in
$\Tau^{(n)}$. For each blade $\Beta_{i}$ let $x_{i}$ be the vertex
of a free edge that is not glued to the central disc. The complex
$\Upsilon$ will be generated by all edges of the form $a_{i}w$ together
with those of the form $a_{i}x_{i}$; it is pictured with bold dark
edges in Figure~\ref{fig:nTurbine}.
\begin{rem}
We found the triangulation $\Beta$ of the tricorne cap space by first
subdividing the free edge in $\Tau^{(1)}$, and then applying cross-flips
and bistellar reductions in the sense of Pachner \cite{Pachner:1987}.
See also the systematic application of bistellar reductions developed
by Lutz in his thesis \cite{Lutz:1999}, and applied by Björner and
Lutz in \cite{Bjorner/Lutz:2000}. The authors find it interesting
that there is a triangulation of the tricorne cap with $3$ free edges
and only $6$ vertices, while they have been unable to find a triangulation
with single free edge and fewer than $7$ vertices.
\end{rem}

\begin{figure}
\includegraphics[scale=0.7]{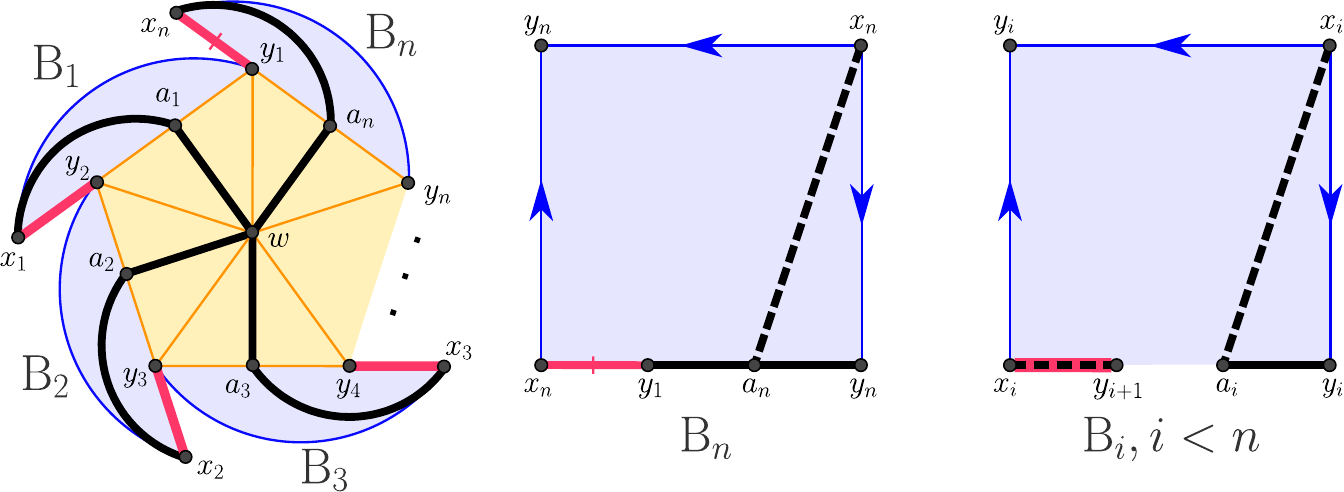}

\caption{\label{fig:nTurbine}The $n$-turbine space $\protect\Tau^{(n)}$
and its component blades $\protect\Beta_{i}$. For clarity, the blades
are shown schematically (with the details of the triangulation of
the blades omitted).}
\end{figure}

The case $n=2$ will require a slight variation of our main construction.
The complex obtained by gluing two copies of $\Beta$ to the cone
over a $4$-gon in the above manner is not simplicial, since we do
not wish to identify the edges $y_{1}y_{2}$ that are in each copy
of $\Beta$. To fix this, we subdivide the $y_{1}y_{2}$ edges in
both copies. All other details for $n=2$ proceed in exactly the same
way as for higher $n$. We picture $\Tau^{(2)}$ in Figure~\ref{fig:2Turbine},
where we take $\Upsilon$ to be the path formed by the bolded dark
edges.

\begin{figure}
\includegraphics[scale=0.7]{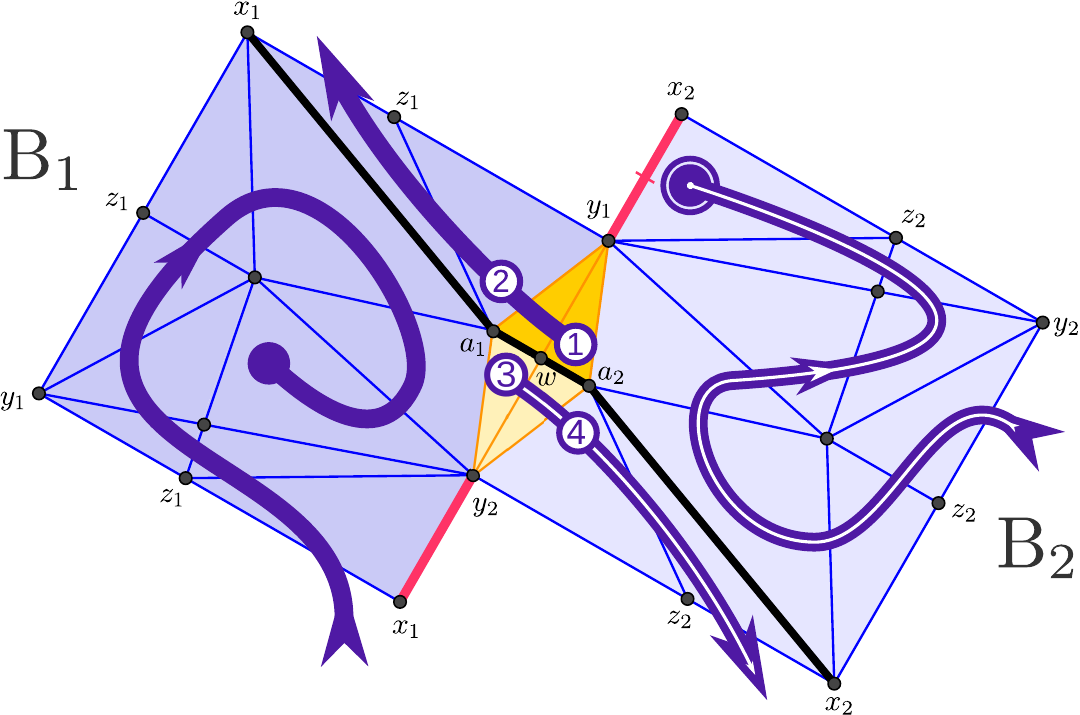}

\caption{\label{fig:2Turbine} The $2$-turbine space $\protect\Tau^{(2)}$.}
\end{figure}

\subsection{Proof of Theorem~\ref{thm:TurbinesExist} for $n=2$}

Although it is not difficult to verify that the order in Figure~\ref{fig:2Turbine}
that begins with facet $\textcircled{\scriptsize{1}}$ is a shelling,
we prefer to break the complex down using relative shellability. Our
strategy is to break $\Tau^{(2)}$ into several simpler subcomplexes,
relatively shell each of them, and use Lemma~\ref{lem:GluingRelShellings}
to glue the relative shellings together.

First, the two facets beginning with $\textcircled{\scriptsize{1}}$
clearly form a shelling, and the order is also a shelling relative
to $\Upsilon$. Second, the (modified) blade $\Beta_{1}$ is shellable
with the indicated order (beginning with $\textcircled{\scriptsize{2}}$,
indicated with the solid arrow). It follows from Lemma~\ref{lem:RelShellFromShell}
that $\Beta_{1}$ is shellable relative to the following subcomplexes:
$\left\langle a_{1}y_{1}\right\rangle $, $\left\langle a_{1}y_{1}\right\rangle \cup\Upsilon$,
and $\left\langle a_{1}y_{1},x_{1}y_{2}\right\rangle \cup\Upsilon$.
By Lemma~\ref{lem:GluingRelShellings}, the union of the initial
two facets and $\Beta_{1}$ are shellable, and also shellable relative
to $\Upsilon$ and $\left\langle a_{1}y_{1},x_{1}y_{2}\right\rangle \cup\Upsilon$.
In a similar way, the two facets beginning with $\textcircled{\scriptsize{3}}$
are shellable relative to $\left\langle a_{1}y_{2}\right\rangle $
or $\left\langle a_{1}y_{2}\right\rangle \cup\Upsilon$ . Thus, we
can use Lemma~\ref{lem:GluingRelShellings} to verify that the concatenation
of the first two facets, the shelling of $\Beta_{1}$, and that of
$\textcircled{\scriptsize{3}}$ and its successor is a shelling. Finally,
the pictured order of $\Beta_{2}$ beginning with $\textcircled{\scriptsize{4}}$
is a shelling (indicated with the hollow arrow). By Lemma~\ref{lem:RelShellFromShell},
this is also a shelling relative to $\left\langle a_{2}y_{2}\right\rangle $
or $\left\langle a_{2}y_{2}\right\rangle \cup\Upsilon$. Another application
of Lemma~\ref{lem:GluingRelShellings} gives that the pictured order
on $\Tau^{(2)}$ is a shelling, and also a shelling relative to $\Upsilon$
and $\Upsilon\cup\left\langle x_{1}y_{2}\right\rangle $. Since the
shelling of $\Tau^{(2)}$ has no homology facet, the complex is contractible.
The $n=2$ case of Theorem~\ref{thm:TurbinesExist} follows from
these (relative) shellings and symmetry of the complex.

\subsection{\label{subsec:ProofMainThmLargen}Proof of Theorem~\ref{thm:TurbinesExist}
for $n\protect\geq3$}

Our proof of Theorem~\ref{thm:TurbinesExist} for $n\geq3$ will
be entirely similar to the $n=2$ case. We first need two shellings
of $\Beta$, one for the final copy of $\Beta$ that is built up in
the shelling order, and one for all other copies of $\Beta$. By inspection,
we observe:
\begin{prop}
\label{prop:BladeShellings}The facet orders pictured in Figure~\ref{fig:BladeShellings}
are shellings of $B$.
\end{prop}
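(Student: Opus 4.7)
The plan is to verify the shelling condition of equation~(\ref{eq:ShellCond}) directly for each of the two facet orderings indicated in Figure~\ref{fig:BladeShellings}. Since $\Beta$ is pure of dimension $2$, this condition is equivalent to the statement that for every $j \geq 2$, the intersection of the triangle $\sigma_j$ with the subcomplex $\Delta_{j-1}$ generated by $\sigma_1,\ldots,\sigma_{j-1}$ is a nonempty union of edges of $\sigma_j$. In pictorial terms, every new triangle laid down after the first must share at least one edge with the already-placed portion of $\Beta$.

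First I would read off the explicit triangulation $\Beta$ from Figure~\ref{fig:BladeShellings}, listing the $6$ vertices and each of the triangular facets, while being careful to resolve the boundary identifications that produce the tricorne cap. Then, for each of the two orderings, I would walk through the facets in the indicated order and check at each step which edges of $\sigma_j$ already belong to $\Delta_{j-1}$. Because $\Beta$ has only a handful of triangles, this is a quick finite check that reduces to inspection: for each $\sigma_j$ one verifies that at least one of its three edges has appeared previously, and confirms that no $\sigma_j$ is a homology facet (so the complex remains contractible after each step).

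The main (and essentially only) obstacle is careful bookkeeping of the vertex identifications, since a pair of edges that look distinct in the planar diagram may in fact be identified in $\Beta$. Once these are tracked correctly, the shelling condition becomes transparent. I would also record, for each ordering, which edges of the first and last facets lie on the three free edges of $\Beta$, since this is precisely the information needed to apply Lemma~\ref{lem:RelShellFromShell} when gluing copies of $\Beta$ around the central shaft of $\Tau^{(n)}$ via Lemma~\ref{lem:GluingRelShellings}. In particular, the two orderings will be used for distinct roles (an ``initial'' blade and a ``terminal'' blade) in the turbine construction, which explains the need to exhibit both.
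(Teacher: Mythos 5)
Your approach—directly verifying the shelling condition of (\ref{eq:ShellCond}) by inspecting the figure, with care taken over the boundary identifications of the tricorne cap—is exactly what the paper does: the proposition is stated as following ``by inspection'' with no further argument supplied. Your reformulation for a pure $2$-complex (each $\sigma_j$ with $j\geq 2$ meets $\Delta_{j-1}$ in a nonempty union of its edges) is the correct thing to check, and the extra observations about free edges and the absence of homology facets, while not needed for the proposition itself, are indeed the data used downstream in Corollary~\ref{cor:BladeRelShellings} and Section~\ref{subsec:ProofMainThmLargen}.
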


Lemma~\ref{lem:RelShellFromShell} lets us easily move to desired
relative shellings:
\begin{cor}
\label{cor:BladeRelShellings} In the following list of facets orders
and relative simplicial complexes, each order is a shelling of the
associated relative complexes.
\begin{enumerate}
\item The left-pictured facet order of Figure~\ref{fig:BladeShellings},
for the relative complexes $\left(\Beta,\left\langle a_{n}y_{n},a_{n}y_{1}\right\rangle \right)$
and $\left(\Beta,\left\langle a_{n}y_{n},a_{n}y_{1},a_{n}x_{n}\right\rangle \right)$.
\item The right-pictured facet order of Figure~\ref{fig:BladeShellings},
for the relative complexes $\left(\Beta,\left\langle a_{i}y_{i}\right\rangle \right)$,
$\left(\Beta,\left\langle a_{i}y_{i},a_{i}x_{i}\right\rangle \right)$,
and $\left(\Beta,\left\langle a_{i}y_{i},a_{i}x_{i},x_{i}y_{i+1}\right\rangle \right)$.
\end{enumerate}
\end{cor}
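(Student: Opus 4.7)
The plan is to deduce each part directly from Proposition~\ref{prop:BladeShellings} by invoking Lemma~\ref{lem:RelShellFromShell}. Each of the subcomplexes $\Gamma$ listed in the statement is generated by one, two, or three edges, so the facets of $\Gamma$ are precisely those edges. Since $\Beta$ is a pure $2$-dimensional complex and each $\Gamma$ is pure $1$-dimensional, Lemma~\ref{lem:RelShellFromShell} applies, and the only task is to check, for each generating edge $\tau$ of $\Gamma$ and each facet $\sigma_j$ in the shelling order of Figure~\ref{fig:BladeShellings}, that one of conditions (\ref{enu:RSgrowface})--(\ref{enuRScontain}) holds.

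First I would dispose of the easy case: whenever $\tau\subseteq\sigma_{j}$, condition (\ref{enuRScontain}) holds immediately. This handles the unique facet of $\Beta$ containing each free edge. Next I would consider the case when $\tau\not\subseteq\sigma_{j}$, so that $\tau\cap\sigma_{j}$ is either empty or a single vertex $v$. If $\tau\cap\sigma_{j}=\emptyset$, then (\ref{enuRSshell}) holds trivially (take any $i<j$ with $\sigma_{i}\cap\sigma_{j}\supseteq\emptyset$; if $j=1$, then $\tau\cap\sigma_{j}=\emptyset\subsetneq\tau'\cap\sigma_{j}$ for some other $\tau'$ sharing a vertex with $\sigma_j$). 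If $\tau\cap\sigma_{j}=\{v\}$, then it suffices to locate either an earlier facet $\sigma_{i}$ containing $v$, giving (\ref{enuRSshell}), or a different generating edge $\tau'$ of $\Gamma$ containing $v$ and an additional vertex of $\sigma_j$, giving (\ref{enu:RSgrowface}).

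The verification is then a short finite check, readable directly from Figure~\ref{fig:BladeShellings}. For the left-pictured shelling, the three generators $a_{n}y_{n}$, $a_{n}y_{1}$, $a_{n}x_{n}$ all share the central vertex $a_{n}$ and all lie in the initial facet $\textcircled{\scriptsize{1}}$, so for every later $\sigma_{j}$ the intersection $\tau\cap\sigma_{j}$ is either contained in $\sigma_{1}$ (giving (\ref{enuRSshell}) with $i=1$) or, in the boundary cases where $\sigma_{j}=\sigma_{1}$, is realized by condition (\ref{enuRScontain}). The right-pictured shelling is handled analogously, with the edges $a_{i}y_{i}$, $a_{i}x_{i}$, $x_{i}y_{i+1}$ all contained in (or attaching at a single vertex to) its initial facet $\textcircled{\scriptsize{1}}$.

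The main obstacle, such as it is, lies in pinpointing the geometric layout of Figure~\ref{fig:BladeShellings}: once one accepts Proposition~\ref{prop:BladeShellings} and identifies which facet of $\Beta$ contains each generating edge of $\Gamma$, the three conditions of Lemma~\ref{lem:RelShellFromShell} are straightforward to check by inspection. I would present the argument as a single brief paragraph noting that Lemma~\ref{lem:RelShellFromShell} applies, and leaving the finite verification to the reader (or indicating it schematically on the figure).
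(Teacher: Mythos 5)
Your approach is exactly the paper's: the paper's entire proof of this corollary is a single sentence invoking Lemma~\ref{lem:RelShellFromShell} against Proposition~\ref{prop:BladeShellings}, with the finite verification left implicit in Figure~\ref{fig:BladeShellings}. So structurally you have it right. However, one geometric claim you make is impossible: in the left-pictured case you assert that the three generating edges $a_{n}y_{n}$, $a_{n}y_{1}$, $a_{n}x_{n}$ ``all lie in the initial facet $\textcircled{\scriptsize{1}}$.'' Three distinct edges through a common vertex $a_{n}$ determine four vertices, which cannot fit in a single $2$-simplex; at most two of these edges can be contained in $\sigma_{1}$. Your subsequent reasoning (``for every later $\sigma_j$ the intersection $\tau\cap\sigma_j$ is contained in $\sigma_1$'') relies on $\tau\subseteq\sigma_1$, so it does not cover the edge that is not in $\sigma_{1}$. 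The fix is short: for that third edge $\tau=a_{n}x_{n}$ (when it is present in $\Gamma$), note $\tau\cap\sigma_{1}=\{a_{n}\}\subsetneq\tau'\cap\sigma_{1}$ where $\tau'$ is one of the other two generating edges that actually is an edge of $\sigma_{1}$; this discharges $j=1$ via condition~(\ref{enu:RSgrowface}). For $j>1$ with $a_{n}\in\sigma_{j}$ and $\tau\not\subseteq\sigma_{j}$, you then need to check from the figure that $a_n$ is contained in $\sigma_i\cap\sigma_j$ for some $i<j$ (or apply~(\ref{enu:RSgrowface}) again), which is part of the finite verification you propose to leave to the reader. Your handling of the $\tau\cap\sigma_{j}=\emptyset$, $j=1$ boundary case has a similar implicit dependence on $\sigma_{1}$ meeting $\Gamma$, which is true here but should be acknowledged as part of what the figure is being used to certify.
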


The subcomplexes of Corollary~\ref{cor:BladeRelShellings} are pictured
in bold and/or dashed edges in Figure~\ref{fig:BladeShellings}.

For each $i$, let $\alpha_{i}$ and $\beta_{i}$ be respectively
the facets $a_{i-1}y_{i}w$ and $a_{i}y_{i}w$ of $\Tau^{(n)}$, and
let $\Sigma_{i}$ be the complex spanned by $\alpha_{i}$ and $\beta_{i}$.
We begin the shelling with $\alpha_{1},\beta_{1}$. By Lemma~\ref{lem:GluingRelShellings},
we can continue with $\Beta_{1}$ in the ordering given by the first
relative shelling of Corollary~\ref{cor:BladeRelShellings}~(2).
Now by applying the simple argument from Example~\ref{exa:2triangleRelTree},
we can follow that with the shelling $\alpha_{2},\beta_{2}$ of $\Sigma_{2}$
(relative to $\left\langle a_{1}y_{2}\right\rangle $).

Continue inductively in this manner, alternately adding $\Beta_{i}$,
followed by $\Sigma_{i+1}$, using Lemma~\ref{lem:GluingRelShellings}
to glue the shellings at each step. After $n-1$ such steps, we have
all of $\Tau^{(n)}$ except for $\Beta_{n}$. Now as $\Beta_{n}$
intersects the central cone in the two edges $a_{n}y_{n}$ and $a_{n}y_{1}$,
we use the relative shelling of Corollary~\ref{cor:BladeRelShellings}~(1)
with Lemma~\ref{lem:GluingRelShellings} to complete the absolute
shelling of $\Tau^{(n)}$. Since this shelling has no homology facets,
the complex is contractible.

The relative statement follows with exactly the same proof, but using
the 2nd or 3rd relative complex in each application of Corollary~\ref{cor:BladeRelShellings}
on $\Beta_{i}$, and relative to $a_{i}w$ in each $\Sigma_{i}$.
Part (\ref{enu:TurbinesRelShell}) follows by this relative shelling,
together with rotational symmetry of $\Tau^{(n)}$.

\subsection{Additional remarks on turbines}

It is straightforward to count the faces of $\Beta$. The $f$-vector
and $h$-vector are 
\begin{align*}
f(\Beta)= & \left(1,6,14,9\right), & h(\Beta)= & \left(1,3,5,0\right).
\end{align*}
Using this calculation, together with direct counting for $\Tau^{(1)}$
and $\Tau^{(2)}$, we see that 
\begin{align*}
f(\Tau^{(1)})= & \left(1,7,19,13\right), & h(\Tau^{(1)})= & \left(1,4,8,0\right),\\
f(\Tau^{(2)})= & \left(1,13,38,26\right), & h(\Tau^{(2)})= & \left(1,10,15,0\right),\\
f(\Tau^{(n)})= & \left(1,5n+1,16n,11n\right), & h(\Tau^{(n)})= & \left(1,5n-2,6n+1,0\right),\text{ for }n\geq3.
\end{align*}

We have several remarks about variations of our construction. First,
if we were only interested in a shelling of $\Tau^{(n)}$, and not
also in the more delicate relative shellability property of Theorem~\ref{thm:TurbinesExist}~(\ref{enu:TurbinesRelShell}),
then we could somewhat simplify the construction. Indeed, we could
simplify the blade construction to have only $2$ free edges, and
glue a single free edge of each blade to the each facet of a cone
over an $n$-gon. A similar argument to that in Section~\ref{subsec:ProofMainThmLargen}
gives shellability. Relative shellability, on the other hand, seems
to require each blade to have two facets adjacent to the central $2n$-gon:
an ``in'' face and an ``out'' face.

We also remark that higher-dimensional analogues of our construction
are possible, at least in some special cases. Adiprasito, Benedetti
and Lutz \cite[Section 2]{Adiprasito/Benedetti/Lutz:2017} have generalized
Hachimori's construction to arbitrary dimension $d\geq2$. The $d$-dimensional
blade analogue will be formed from their example, by subdividing the
free face into $3$ free faces. There are some technical difficulties
in forming the central ``shaft'' portion of a $d$-dimensional analogue
of a turbine, as the cyclic symmetry of the $n$-gon does not cleanly
generalize to higher dimensions. Special cases are easy to construct,
such as an analogue of $\Tau^{(2)}$.

The restriction of Theorem~\ref{thm:ShellNPcomplete} to any dimension
higher than $2$ follows immediately by observing that coning preserves
shellability, so we do not need complexes with such properties for
$\NPtime$-completeness. As we at present have no other application
for such complexes, we do not here pursue further higher-dimensional
analogues of Theorem~\ref{thm:TurbinesExist}.

\section{\label{sec:NPcomplete}$\protect\NPtime$-completeness of $\protect\SHELL$}

In this section, we assume general familiarity with the theory of
$\NPtime$-completeness and polynomial reductions, on the level of
\cite{Papadimitriou:1994}.

The decision problem $\SHELL$ asks, given a list of facets of an
abstract simplicial complex $\Delta$, whether there is a shelling
of $\Delta$. Given an ordering of the facets, checking the condition
(\ref{eq:ShellCond}) can certainly be done in polynomial time, so
$\SHELL$ is in $\NPtime$. It is well-known that the restriction
of the $\TSAT$ problem where every literal occurs at most twice is
$\NPtime$-complete \cite[Proposition 9.3]{Papadimitriou:1994}, and
we will give a polynomial reduction from this restricted $\TSAT$
to $\SHELL$. As our reduction will involve only $2$-dimensional
facets, this will prove Theorem~\ref{thm:ShellNPcomplete}. Our reduction
will be simpler in several aspects than that of \cite{Goaoc/Patak/Patakova/Tancer/Wagner:2019},
and our proofs will be phrased directly in terms of shellings (rather
than in terms of collapsings).

We begin with an overview of our reduction. It is usual to divide
$\NPtime$-hardness proofs into building blocks called \emph{gadgets}.
We will have a choice gadget, corresponding to a variable, which will
consist of a triangulated sphere with a $\Tau^{(1)}$ glued along
its free face to a portion of the equator. The choice gadgets are
glued together by identifying an edge in the $\Tau^{(1)}$'s, so that
the triangulated spheres intersect at a single vertex. We will also
have a constraint gadget, corresponding to a clause, which will be
a $\Tau^{(2)}$ or $\Tau^{(3)}$ (according to the number of literals
in the clause). These constraint gadgets are glued to the choice gadgets
of the corresponding variables by gluing the central vertex of the
tree $\Upsilon$ to the common vertex of the triangulated spheres
in the choice gadgets, by wrapping each branch of $\Upsilon$ around
a choice gadget's equator, and by gluing the free face to the upper
or lower hemisphere (depending on whether the literal in question
is negated). Precise details are in Section~\ref{subsec:Reduction}.

An assignment of variables will correspond with a selection of upper/lower
hemispheres, one from each choice gadget. We will show that such an
assignment is satisfying if and only there is a shelling that has
homology facets exactly in the selected hemispheres.

\subsection{Gadgets}

Our choice gadget will consist of three parts. The first part will
be a $\Tau^{(1)}$. The second part will be a $2$-dimensional disc
$D$ having a boundary vertex $x$ that is incident to at least $2$
interior vertices $y$ and $y'$. Such a $D$ may be obtained by subdividing
a triangle with vertices $x,w,a$ to get a new interior vertex $y$,
then subdividing the edge $ay$ to get a new interior vertex $y'$.
See Figure~\ref{fig:ChoiceGadget}.

The third part will be an isomorphic copy of $D$, which we label
$\neg D$, and in which we label the interior vertices $\neg y$ and
$\neg y'$.

We glue the discs $D$ and $\neg D$ along their boundaries, and glue
the free edge of the $\Tau^{(1)}$ to the edge $wx$. Here $x$ is
as above, and $w$ is a vertex that is adjacent to $x$ in the common
boundary of $D$ and $\neg D$. The \emph{gluing edge} of the choice
gadget will be the edge $sw$ from Figure~\ref{fig:Hachimori} in
its $\Tau^{(1)}$ complex. The discs $D$ and $\neg D$ we call the
(\emph{positive} and \emph{negative}) \emph{literal hemispheres}.

\begin{figure}
\includegraphics[scale=0.7]{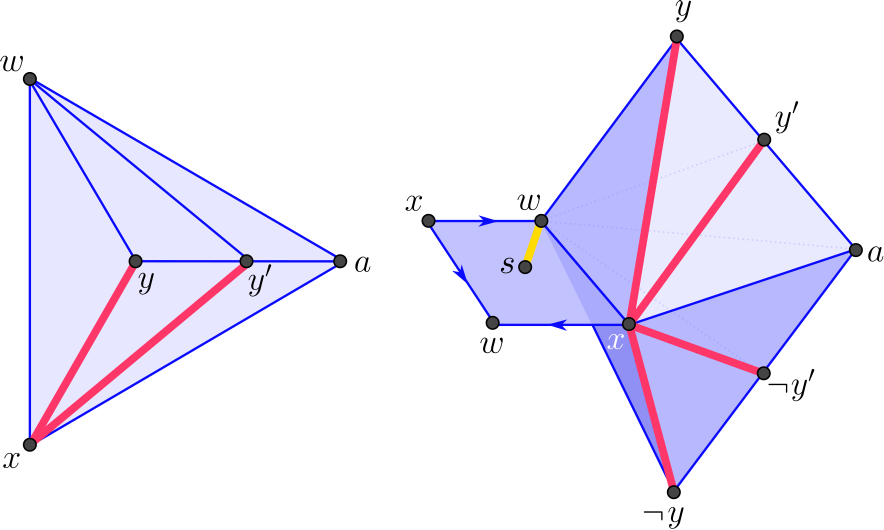}

\caption{\label{fig:ChoiceGadget}The literal hemisphere (left), and the choice
gadget (shown at right in 3D, with the $\protect\Tau^{(1)}$ complex
represented schematically). The choice gadgets are glued together
along the bold edge $ws$. The remaining bold edges are used to attach
free edges from constraint gadgets.}
\end{figure}

In the variation of $\TSAT$ where each literal occurs at most twice,
we must consider clauses with either two or three literals. For clauses
with two literals, our constraint gadget will be a $\Tau^{(2)}$;
for those with three literals, it will be a $\Tau^{(3)}$. We will
need the tree subcomplex $\Upsilon\cup\left\langle \mathcal{F}\right\rangle $
from Theorem~\ref{thm:TurbinesExist}~(\ref{enu:TurbinesTree}).
We call this subcomplex the \emph{gluing tree} of the constraint gadget.
As in Section~\ref{subsec:TurbineConstruction}, we may take the
gluing tree to consist of two or three branches $w,a_{i},x_{i},\tilde{y}_{i}$,
where each $x_{i}\tilde{y}_{i}$ is a free edge in the turbine. (Here
$\tilde{y}_{i}$ corresponds to $y_{i+1}\mod n$ in Figures~\ref{fig:nTurbine}
and \ref{fig:2Turbine}; $a_{i}$ and $x_{i}$ are as in the figures.)
\begin{rem}
\label{rem:ImprovementsOverPrior}Our choice gadgets consist of three
parts, fitting together in a simple way. For comparison, the choice
gadgets of \cite{Goaoc/Patak/Patakova/Tancer/Wagner:2019} have six
parts, and there are are some subtleties in how these parts are attached.
We also completely avoid the use of their consistency (`conjunction')
gadget.
\end{rem}

\subsection{\label{subsec:Reduction}Reduction}

The details of the reduction are now straightforward.

We are given a list of clauses, each with $2$ or $3$ variables.
Without loss of generality, each variable appears at most once in
each clause.

For each variable appearing in the list, we take a choice gadget.
We glue all of these gadgets together by identifying their gluing
edges to a single edge $sw$. Thus, the edge $sw$ is shared by all
of the choice gadgets, and the vertex $w$ is in every literal hemisphere.

Now for each clause, we attach a $\Tau^{(2)}$ or $\Tau^{(3)}$ constraint
gadget, according to whether the clause has $2$ or $3$ literals.
Thus, each literal $\ell$ of the clause corresponds to a branch of
the gluing tree. The literal also corresponds to a literal hemisphere
$D$ or $\neg D$ in a choice gadget. Let $y^{*}$ be the vertex $y$,
$y'$, $\neg y$, or $\neg y'$ of this literal hemisphere, where
we decorate $y$ with $\neg$ if $\ell$ is negated and with a prime
if this is the second clause containing $\ell$. (Recall that each
literal occurs in at most two clauses.) Now we glue the corresponding
branch $w,a_{i},x_{i},\tilde{y}_{i}$ of the gluing tree to the vertices
and edges $w,a,x,y^{*}$ of the literal hemisphere. That is, we glue
$w,a_{i},x_{i},\tilde{y}_{i}$ to the choice gadget along a path that
begins by wrapping around a portion of the equator (common to both
literal hemispheres), and whose final vertex is in the interior of
a literal hemisphere.

We remark that, since a given variable appears in a given clause at
most once, the vertices of the gluing tree attach to distinct vertices
in the union of choice gadgets.

Although we have described this reduction in terms of gluing, the
process admits a clear translation into facets, sets, and abstract
simplicial complexes. It is straightforwardly implemented in polynomial
time.

We denote by $\Delta$ the complex obtained by the polynomial reduction.
It is easy to calculate the homotopy type of $\Delta$:
\begin{lem}
\label{lem:HomTypeRedComplex} The complex $\Delta$ obtained by the
polynomial reduction is homotopy equivalent to a bouquet of $2$-spheres,
where the $2$-spheres are in bijective correspondence with the variables
in the $\TSAT$ instance.
\end{lem}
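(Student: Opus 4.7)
The plan is to compute the homotopy type of $\Delta$ by repeatedly applying the following standard fact: if $X = A \cup B$ is a CW complex with both $B$ and $A \cap B$ contractible, then $X$ is homotopy equivalent to $A$. This lets us discard one turbine at a time, since by Theorem~\ref{thm:TurbinesExist} the $\Tau^{(n)}$ are all contractible, as are the various trees and edges used for gluing.

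First, I would show that each choice gadget $C_v$ is homotopy equivalent to a $2$-sphere. The two discs $D$ and $\neg D$, glued along their common boundary, form a topological $2$-sphere $S$. The $\Tau^{(1)}$ attached to $S$ meets $S$ only along the single edge $wx$; since both $\Tau^{(1)}$ and this edge are contractible, the fact above gives $C_v \simeq S \simeq S^2$. Next, the union of all choice gadgets is obtained by identifying them along the single contractible edge $sw$; collapsing this edge to a point shows that the union is homotopy equivalent to a wedge of $2$-spheres, one per variable.

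Finally, I would attach the constraint gadgets one at a time. A constraint gadget is a $\Tau^{(n)}$ for $n \in \{2,3\}$, hence contractible. It is attached to the previously built subcomplex along its gluing tree $\Upsilon \cup \left\langle \mathcal{F} \right\rangle$, which is contractible by Theorem~\ref{thm:TurbinesExist}~(\ref{enu:TurbinesTree}). Since each variable appears at most once per clause, the branches of the gluing tree are attached to paths in distinct literal hemispheres, meeting only at the shared central vertex $w$; the image of the gluing tree is therefore itself a tree, and hence contractible. Applying the standard fact with $B$ equal to the constraint gadget, each such attachment is a homotopy equivalence, and iterating over all clauses yields the claimed wedge. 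The main obstacle I anticipate is purely combinatorial: verifying in detail that the gluing map of each constraint gadget embeds the gluing tree into the previously built subcomplex so that its image really is a tree; this is a bookkeeping check that follows from the structure of the reduction and the at-most-once-per-clause restriction on literals.
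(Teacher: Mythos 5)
Your proposal is correct and takes essentially the same approach as the paper. The paper's own proof is terser: it observes that the union $\Delta_{\mathrm{lit}}$ of all the literal hemispheres is already a bouquet of spheres (wedged at $w$), and then attaches all the contractible $\Tau^{(1)}$, $\Tau^{(2)}$, and $\Tau^{(3)}$ pieces at once along contractible tree subcomplexes, invoking a gluing lemma from Bj\"orner; your version builds the choice gadgets first before unioning them and then adding the constraint gadgets, and spells out the check (which the paper leaves implicit, supported by the remark that the gluing tree vertices attach to distinct vertices) that the image of each gluing tree really is a tree. Same key fact, same decomposition in substance, just a different order of assembly and a bit more detail on the final verification.
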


\begin{proof}
The union $\Delta_{\mathrm{lit}}$ of the literal hemispheres is exactly
a bouquet of simplicial spheres. The complex $\Delta$ can be obtained
from $\Delta_{\mathrm{lit}}$ by repeatedly attaching copies of the
contractible complexes $\Tau^{(1)},\Tau^{(2)},$ and $\Tau^{(3)}$
along a contractible (tree) subcomplex. The lemma now follows by well-known
results on gluing and homotopy type \cite[Lemma 10.3]{Bjorner:1995}.
\end{proof}

\subsection{\label{subsec:SatImpliesShell}Satisfiability implies shellability}

Suppose that our $\TSAT$ instance has a satisfying assignment, which
sets some literals to true and their negations to false. We will show
the complex $\Delta$ constructed in Section~\ref{subsec:Reduction}
is shellable, using repeated applications of Lemma~\ref{lem:GluingRelShellings}.
We will need the following (relative) shellings of the literal hemispheres:
\begin{lem}
\label{lem:LitHemisphereShell}The literal hemisphere disc $D$ is
shellable relative to the subcomplexes $\left\langle wx\right\rangle $,
$\partial D$, and $\partial D\cup\left\langle \mathcal{E}\right\rangle $,
where $\mathcal{E}$ is any subset of the edges $\left\{ xy,xy'\right\} $.
\end{lem}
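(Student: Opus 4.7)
The plan is to exhibit an explicit shelling of $D$ and then verify, using Lemma~\ref{lem:RelShellFromShell}, that this shelling is simultaneously a relative shelling for each of the listed subcomplexes. Recall that $D$ has five facets, namely $xwy$, $xyy'$, $wyy'$, $wy'a$, and $xy'a$, arising from the successive subdivisions described just before the lemma; and that $\partial D$ has facets $xw$, $wa$, $xa$.

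I would propose the order $\sigma_1=xwy$, $\sigma_2=xyy'$, $\sigma_3=wyy'$, $\sigma_4=wy'a$, $\sigma_5=xy'a$. A quick inspection confirms that this is an absolute shelling of $D$: the successive intersections with the previously-shelled subcomplex are the nonempty pure $1$-complexes $\langle xy\rangle$, $\langle wy, yy'\rangle$, $\langle wy'\rangle$, and $\langle xy', y'a\rangle$.

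For each subcomplex $\Gamma$ in the statement, verifying the hypothesis of Lemma~\ref{lem:RelShellFromShell} then reduces to a short check of its three conditions for every edge-facet $\tau$ of $\Gamma$ against every $\sigma_j$. Three features make this routine: first, $wx, xy \subset \sigma_1$ and $xy' \subset \sigma_2$, so condition (3) of the lemma absorbs every ``special'' facet of $\Gamma$ on its first appearance in the shelling; second, for each later $j$, the singleton $\tau \cap \sigma_j$ always lies in some earlier $\sigma_i \cap \sigma_j$, giving condition (2); and third, the boundary edges $wa$ and $xa$ meet the early facets only in single vertices that are strictly contained in an adjacent boundary edge's intersection with the same facet, so condition (1) applies with $\tau'$ chosen appropriately among $\{wx, wa, xa\}$.

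The only subtlety, and the reason for choosing this particular order, is that the naive shelling $xwy, wyy', xyy', wy'a, xy'a$ would fail for $\Gamma = \partial D \cup \langle xy'\rangle$ at step $j=2$: the vertex $y'$ enters via $\sigma_2 = wyy'$, while the facet $xy' \in \Gamma$ meets $\sigma_2$ only in $\{y'\}$, which is not contained in $\sigma_1 \cap \sigma_2 = wy$. Placing $xyy'$ in the second position is precisely what is needed to make condition (3) handle $xy'$ at the same step where $y'$ is introduced, and thus to make a single shelling serve uniformly for all of the listed relative complexes.
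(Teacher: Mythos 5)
Your proof is correct and takes essentially the same approach as the paper: the paper's proof says only that the lemma follows from Lemma~\ref{lem:RelShellFromShell} ``using any shelling order beginning with the facets $wxy, xyy'$,'' and your explicit shelling and verification instantiate exactly that. Your observation about the necessity of placing $xyy'$ second (rather than $wyy'$) is a helpful elaboration of why the paper constrains the first two facets.
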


\begin{proof}
All are immediate from Lemma~\ref{lem:RelShellFromShell}, using
any shelling order beginning with the facets $wxy,xyy'$.
\end{proof}
Given a satisfying assignment, we now build up the same complex $\Delta$
as in Section~\ref{subsec:Reduction}, but by gluing together subcomplexes
in a specific order that is compatible with Lemma~\ref{lem:GluingRelShellings}.
There are three main steps:\medskip{}

\emph{Step 1: `augmented' false literal hemispheres. }We begin with
the portion of the choice gadget consisting of $\Tau^{(1)}\cup D^{*}$,
where $D^{*}$ is either the positive or negative literal hemisphere.
By Lemmas~\ref{lem:GluingRelShellings} and \ref{lem:LitHemisphereShell}
and the discussion in Section~\ref{subsec:1turbine}, these \emph{augmented
literal hemispheres} are both shellable and also shellable relative
to $sw$. We take an augmented literal hemisphere for each false literal
in our assignment, and glue all copies together along the gluing edge
$sw$. By additional applications of Lemma~\ref{lem:GluingRelShellings},
this complex is shellable.

\emph{Step 2: constraint gadgets.} Next, for each clause in the $\TSAT$
instance, we attach a constraint gadget along the portion of its gluing
tree that is already present in the complex built so far. This portion
consists of $\Upsilon$, which attaches to the equators of false literal
hemispheres; together with the subset of the free edges $\mathcal{F}$
corresponding to the false literals in the clause, which attach to
the interiors of the corresponding false literal hemispheres. (See
the description in Section~\ref{subsec:Reduction}.) As every clause
contains at least one true literal, at least one edge of $\mathcal{F}$
is not glued. Thus, by Lemma~\ref{lem:GluingRelShellings} and Theorem~\ref{thm:TurbinesExist},
shellability is preserved under attaching each constraint gadget.

\emph{Step 3: true literal hemispheres.} Finally, we attach the true
literal hemispheres, yielding the full complex $\Delta$. By Lemma~\ref{lem:LitHemisphereShell}
and more applications of Lemma~\ref{lem:GluingRelShellings}, this
complex is shellable, as desired.

\medskip{}

We notice that the edge of $\mathcal{F}$ in each constraint gadget
that does not attach to an edge in the interior of a literal hemisphere
in Step 2 is crucial for this construction. Indeed, it is straightforward
to show that a turbine is not shellable relative to $\Upsilon\cup\left\langle \mathcal{F}\right\rangle $.
\begin{rem}
We observe that the complex built in this manner remains contractible
through Step 2, but that attaching each true literal hemisphere in
Step 3 creates a homology facet.
\end{rem}

\subsection{Shellability implies satisfiability}

Given a shelling of $\Delta$, and a subcomplex $\Gamma$ generated
by facets, we say that $\Gamma$ \emph{finishes shelling }at $\sigma$
if $\sigma$ is the last facet of the shelling that is contained in
$\Gamma$. A subcomplex may finish shelling either at a homology facet
of the shelling, or in a facet containing a free face in the partial
shelling. In the latter case, the free face $\tau$ obviously must
be free in $\Gamma$, and $\Gamma$ must finish shelling before any
other facet containing $\tau$ occurs in the shelling.

Now consider the complex $\Delta$ constructed by the polynomial reduction.
The facets of $\Delta$ consist of the disjoint union of the facets
of choice gadgets and of constraint gadgets. Moreover, the facets
of each choice gadget are the disjoint union of those of the $\Tau^{(1)}$,
those of the positive literal hemisphere, and those of the negative
literal hemisphere.

By Lemma~\ref{lem:HomTypeRedComplex}, we have a homology facet for
each variable. As removing the homology facets leaves a contractible
complex, there must be exactly one homology facet in each choice gadget,
contained in either the positive or negative literal hemisphere. For
each variable, we set the hemisphere containing the homology facet
to be the \emph{true} hemisphere, and the other one to be the \emph{false}
hemisphere. (Thus, a selection of homology facets corresponds, up
to equivalence, to a truth assignment of the variables.)

We now sort the subcomplexes formed by the $\Tau^{(1)}$'s, the literal
hemispheres, and the constraint gadgets according to when they finish
shelling. By results of Björner and Wachs \cite[Lemma 2.7]{Bjorner/Wachs:1996},
the homology facets of the complex may be taken without loss of generality
to come last in any shelling, thus the true literal hemispheres may
be assumed to finish shelling after all other considered subcomplexes.
Since each $\Tau^{(1)}$ has a unique free face, it must finish shelling
before its associated false literal hemisphere. Each false literal
hemisphere has only three free faces, with one attached to a $\Tau^{(1)}$,
and the other two attached to all of the constraint gadgets for clauses
containing the literal. Thus, a false literal hemisphere must finish
shelling before any constraint gadget for any clause containing the
literal. Since each constraint gadget has only two or three free faces,
each glued to an edge in a literal hemisphere, it must finish shelling
before  at least one of the literals in the corresponding clause.

Since each false literal finishes shelling before all the clauses
containing it, and each clause finishes shelling before at least one
literal contained in it, we must have that each clause contains a
true literal.
\begin{rem}
The authors of \cite{Goaoc/Patak/Patakova/Tancer/Wagner:2019} obtained
as a consequence of their main result that checking vertex-decomposability
is $\NPtime$-complete, and that $\SHELL$ is $\NPtime$-complete
even when restricted to order complexes of posets. We sketch how to
recover this result using our techniques, and our simpler construction.
Since the barycentric subdivision of a shellable simplicial complex
is a vertex-decomposable order complex \cite[Section 11]{Bjorner/Wachs:1997},
it suffices to show that if any subdivision of the complex $\Delta$
constructed by the polynomial reduction is shellable, then the underlying
formula is satisfiable. But as a free face in a subdivision of a simplicial
complex arises only by subdividing a free face in the original complex,
an argument entirely similar to the above shows that shellability
of a subdivision implies satisfiability.
\end{rem}

\subsection*{Data availability}

Data sharing not applicable to this article as no datasets were generated
or analysed during the current study.

\bibliographystyle{F}
\bibliography{2_Users_russw_Documents_Research_Master}

\def\cprime{$'$}
\providecommand{\bysame}{\leavevmode\hbox to3em{\hrulefill}\thinspace}
\providecommand{\href}[2]{#2}
\begin{thebibliography}{10}

\bibitem{Adiprasito/Benedetti:2017}
Karim~A. Adiprasito and Bruno Benedetti, \emph{Subdivisions, shellability, and
  collapsibility of products}, Combinatorica \textbf{37} (2017), no.~1, 1--30.

\bibitem{Adiprasito/Benedetti/Lutz:2017}
Karim~A. Adiprasito, Bruno Benedetti, and Frank~H. Lutz, \emph{Extremal
  examples of collapsible complexes and random discrete {M}orse theory},
  Discrete Comput. Geom. \textbf{57} (2017), no.~4, 824--853.

\bibitem{Adiprasito/Sanyal:2016}
Karim~A. Adiprasito and Raman Sanyal, \emph{Relative {S}tanley-{R}eisner theory
  and upper bound theorems for {M}inkowski sums}, Publ. Math. Inst. Hautes
  \'{E}tudes Sci. \textbf{124} (2016), 99--163.

\bibitem{Bing:1983}
R.~H. Bing, \emph{The geometric topology of 3-manifolds}, American Mathematical
  Society Colloquium Publications, vol.~40, American Mathematical Society,
  Providence, RI, 1983.

\bibitem{Bjorner:1995}
Anders Bj{\"o}rner, \emph{Topological methods}, Handbook of combinatorics,
  Vol.\ 1,\ 2, Elsevier, Amsterdam, 1995, pp.~1819--1872.

\bibitem{Bjorner/Lutz:2000}
Anders Bj\"{o}rner and Frank~H. Lutz, \emph{Simplicial manifolds, bistellar
  flips and a 16-vertex triangulation of the {P}oincar\'{e} homology 3-sphere},
  Experiment. Math. \textbf{9} (2000), no.~2, 275--289.

\bibitem{Bjorner/Wachs:1996}
Anders Bj{\"o}rner and Michelle~L. Wachs, \emph{Shellable nonpure complexes and
  posets. {I}}, Trans. Amer. Math. Soc. \textbf{348} (1996), no.~4, 1299--1327.

\bibitem{Bjorner/Wachs:1997}
\bysame, \emph{Shellable nonpure complexes and posets. {II}}, Trans. Amer.
  Math. Soc. \textbf{349} (1997), no.~10, 3945--3975.

\bibitem{Chari:1997}
Manoj~K. Chari, \emph{Two decompositions in topological combinatorics with
  applications to matroid complexes}, Trans. Amer. Math. Soc. \textbf{349}
  (1997), no.~10, 3925--3943.

\bibitem{Codenotti/Katthan/Sanyal:2019}
Giulia Codenotti, Lukas Katth\"an, and Raman Sanyal, \emph{On $f$- and
  $h$-vectors of relative simplicial complexes}, Algebraic Combinatorics
  \textbf{2} (2019), no.~3, 343--353, {arXiv:1711.02729}.

\bibitem{Danaraj/Klee:1978b}
Gopal Danaraj and Victor Klee, \emph{A representation of {$2$}-dimensional
  pseudomanifolds and its use in the design of a linear-time shelling
  algorithm}, Ann. Discrete Math. \textbf{2} (1978), 53--63, Algorithmic
  aspects of combinatorics (Conf., Vancouver Island, B.C., 1976).

\bibitem{Danaraj/Klee:1978a}
\bysame, \emph{Which spheres are shellable?}, Ann. Discrete Math. \textbf{2}
  (1978), 33--52, Algorithmic aspects of combinatorics (Conf., Vancouver
  Island, B.C., 1976).

\bibitem{Goaoc/Patak/Patakova/Tancer/Wagner:2018}
Xavier Goaoc, Pavel Pat\'{a}k, Zuzana Pat\'{a}kov\'{a}, Martin Tancer, and Uli
  Wagner, \emph{Shellability is {NP}-complete}, 34th {I}nternational
  {S}ymposium on {C}omputational {G}eometry, LIPIcs. Leibniz Int. Proc.
  Inform., vol.~99, Schloss Dagstuhl. Leibniz-Zent. Inform., Wadern, 2018,
  {arXiv:1711.08436}, pp.~Art. No. 41, 15.

\bibitem{Goaoc/Patak/Patakova/Tancer/Wagner:2019}
\bysame, \emph{Shellability is {NP}-complete}, J. ACM \textbf{66} (2019),
  no.~3, Art. 21, 18.

\bibitem{Hachimori:2000}
Masahiro Hachimori, \emph{Combinatorics of constructible complexes}, Thesis
  (Ph.D.)--University of Tokyo, 2000.

\bibitem{Hachimori:2008}
Masahiro Hachimori, \emph{Decompositions of two-dimensional simplicial
  complexes}, Discrete Math. \textbf{308} (2008), no.~11, 2307--2312.

\bibitem{Herzog/Hibi:2011}
J{\"u}rgen Herzog and Takayuki Hibi, \emph{Monomial ideals}, Graduate Texts in
  Mathematics, vol. 260, Springer-Verlag London Ltd., London, 2011.

\bibitem{Hochster:1972}
Melvin Hochster, \emph{Rings of invariants of tori, {C}ohen-{M}acaulay rings
  generated by monomials, and polytopes}, Ann. of Math. (2) \textbf{96} (1972),
  318--337.

\bibitem{Kaibel/Pfetsch:2003}
Volker Kaibel and Marc~E. Pfetsch, \emph{Some algorithmic problems in polytope
  theory}, Algebra, geometry, and software systems, Springer, Berlin, 2003,
  {arXiv:math/0202204}, pp.~23--47.

\bibitem{Kuperberg:2014}
Greg Kuperberg, \emph{Knottedness is in {NP}, modulo {GRH}}, Adv. Math.
  \textbf{256} (2014), 493--506.

\bibitem{Lackenby:2016UNP}
Marc Lackenby, \emph{The efficient certification of knottedness and {T}hurston
  norm}, preprint, 2016, {arXiv:1604.00290}.

\bibitem{Lutz:1999}
Frank~H. Lutz, \emph{Triangulated manifolds with few vertices and
  vertex-transitive group actions}, Berichte aus der Mathematik. [Reports from
  Mathematics], Verlag Shaker, Aachen, 1999, Dissertation, Technischen
  Universit\"{a}t Berlin, Berlin, 1999.

\bibitem{Malgouyres/Frances:2008}
R\'{e}my Malgouyres and Angel~R. Franc\'{e}s, \emph{Determining whether a
  simplicial 3-complex collapses to a 1-complex is {NP}-complete}, Discrete
  geometry for computer imagery, Lecture Notes in Comput. Sci., vol. 4992,
  Springer, Berlin, 2008, pp.~177--188.

\bibitem{Miller/Sturmfels:2005}
Ezra Miller and Bernd Sturmfels, \emph{Combinatorial commutative algebra},
  Graduate Texts in Mathematics, vol. 227, Springer-Verlag, New York, 2005.

\bibitem{Pachner:1987}
Udo Pachner, \emph{Konstruktionsmethoden und das kombinatorische
  {H}om\"{o}omorphieproblem f\"{u}r {T}riangulationen kompakter semilinearer
  {M}annigfaltigkeiten}, Abh. Math. Sem. Univ. Hamburg \textbf{57} (1987),
  69--86.

\bibitem{Papadimitriou:1994}
Christos~H. Papadimitriou, \emph{Computational complexity}, Addison-Wesley
  Publishing Company, Reading, MA, 1994.

\bibitem{Reisner:1976}
Gerald~A. Reisner, \emph{Cohen-{M}acaulay quotients of polynomial rings},
  Advances in Math. \textbf{21} (1976), no.~1, 30--49.

\bibitem{Shareshian:2001}
John Shareshian, \emph{On the shellability of the order complex of the subgroup
  lattice of a finite group}, Trans. Amer. Math. Soc. \textbf{353} (2001),
  no.~7, 2689--2703.

\bibitem{Simon:1994}
Robert~Samuel Simon, \emph{Combinatorial properties of ``cleanness''}, J.
  Algebra \textbf{167} (1994), no.~2, 361--388.

\bibitem{Stanley:1975a}
Richard~P. Stanley, \emph{Cohen-{M}acaulay rings and constructible polytopes},
  Bull. Amer. Math. Soc. \textbf{81} (1975), 133--135.

\bibitem{Stanley:1996}
\bysame, \emph{Combinatorics and commutative algebra}, second ed., Progress in
  Mathematics, vol.~41, Birkh\"auser Boston Inc., Boston, MA, 1996.

\bibitem{Stanley:1996b}
\bysame, \emph{Flag-symmetric and locally rank-symmetric partially ordered
  sets}, Electron. J. Combin. \textbf{3} (1996), no.~2, Research Paper 6,
  approx. 22, The Foata Festschrift.

\bibitem{Tancer:2016}
Martin Tancer, \emph{Recognition of collapsible complexes is {NP}-complete},
  Discrete Comput. Geom. \textbf{55} (2016), no.~1, 21--38.

\bibitem{Villarreal:2015}
Rafael~H. Villarreal, \emph{Monomial algebras}, second ed., Monographs and
  Research Notes in Mathematics, CRC Press, Boca Raton, FL, 2015.

\bibitem{Wachs:1999b}
Michelle~L. Wachs, \emph{Obstructions to shellability}, Discrete Comput. Geom.
  \textbf{22} (1999), no.~1, 95--103, {arXiv:math/9707216}.

\bibitem{Wachs:2007}
\bysame, \emph{Poset topology: {T}ools and applications}, Geometric
  Combinatorics, IAS/Park City Math. Ser., vol.~13, Amer. Math. Soc.,
  Providence, RI, 2007, {arXiv:math/0602226}, pp.~497--615.

\bibitem{White:2018unp}
Jacob~A. White, \emph{The {H}opf monoid of megagreedoids}, preprint, 2018,
  {arXiv:1802.04386}.

\end{thebibliography}

\end{document}